\def\DB{{\rm DB}}
\def\eps{\varepsilon}
 \newtheorem{theorem}{Theorem}
 \newtheorem{lemma}[theorem]{Lemma}
\newcommand{\answerCommand}{}%
  {\renewcommand{\answerCommand}{#1\\}%
   \noindent\textbf{\answerCommand}%
   }{\\}%
\renewcommand{\answerCommand}{#1}%
   \noindent\textbf{\answerCommand}%
\title{A computational approach to Conway's thrackle conjecture}
\author{Radoslav Fulek\thanks{Ecole Polytechnique F\'ed\'erale de Lausanne. Email:~\texttt{radoslav.fulek@epfl.ch}.}
 \and J\'anos Pach\thanks{Ecole Polytechnique F\'ed\'erale de Lausanne and City College, New York. Email:~\texttt{pach@cims.nyu.edu}.
Research partially supported by NSF grant CCF-08-30272, grants
from OTKA, SNF, and PSC-CUNY.}}
\date{}
\begin{document}

\maketitle

\thispagestyle{empty}

 \begin{abstract}
 A drawing of a graph in the plane is called a {\em thrackle} if
 every pair of edges meets precisely once, either at a common
 vertex or at a proper crossing. Let $t(n)$ denote the maximum
 number of edges that a thrackle of $n$ vertices can have.
 According to a 40 years old conjecture of Conway, $t(n)=n$ for every
 $n\ge 3$. For any $\eps>0$, we give an algorithm terminating in
 $e^{O((1/\eps^2)\ln(1/\eps))}$ steps to decide whether $t(n)\le (1+\eps)n$ for all $n\ge 3$. Using this approach, we improve the best known upper
 bound, $t(n)\le \frac 32(n-1)$, due to Cairns and Nikolayevsky, to
 $\frac{167}{117}n<1.428n$.
 \end{abstract}

% \begin{document}

% \begin{frontmatter}

% \title{A computational approach to Conway's thrackle conjecture}
%\author{Radoslav Fulek\thanks{Ecole Polytechnique F\'ed\'erale de Lausanne. Email:~\texttt{radoslav.fulek@epfl.ch}.}
%\and J\'anos Pach\thanks{Ecole Polytechnique F\'ed\'erale de Lausanne and City College, New York. Email:~\texttt{pach@cims.nyu.edu}.
%Research partially supported by NSF grant CCF-08-30272, grants
%from OTKA, SNF, and PSC-CUNY.}}

% \author[rvt]{Radoslav Fulek \corref{cor1}}
% \ead{radoslav.fulek@epfl.ch}
% \author[rvt,focal]{J\'{a}nos Pach\fnref{fn1}}
% \ead{pach@cims.nyu.edu}

% \cortext[cor1]{Corresponding author}
% \fntext[fn1]{Research partially supported by NSF grant CCF-08-30272, grants
% from OTKA, SNF, and PSC-CUNY}

% \address[rvt]{Ecole Polytechnique F\'ed\'erale de Lausanne}
% \address[focal]{City College, New York}

%\thispagestyle{empty}

% \begin{abstract}
% A drawing of a graph in the plane is called a {\em thrackle} if
% every pair of edges meets precisely once, either at a common
% vertex or at a proper crossing. Let $t(n)$ denote the maximum
% number of edges that a thrackle of $n$ vertices can have.
% According to a 40 years old conjecture of Conway, $t(n)=n$ for every
% $n\ge 3$. For any $\eps>0$, we give an algorithm terminating in
% $e^{O((1/\eps^2)\ln(1/\eps))}$ steps to decide whether $t(n)\le (1+\eps)n$ for all $n\ge 3$. Using this approach, we improve the best known upper
% bound, $t(n)\le \frac 32(n-1)$, due to Cairns and Nikolayevsky, to
% $\frac{167}{117}n<1.428n$.
% \end{abstract}
%\newpage

% \end{frontmatter}

%\pagenumbering{arabic} \setcounter{page}{1}

\section{Introduction}

A {\em drawing} of a graph (or a {\em topological graph}) is a
representation of the graph in the plane such that the vertices
are represented by distinct points and the edges by (possibly
crossing) simple continuous curves connecting the corresponding
point pairs and not passing through any other point representing a
vertex. If it leads to no confusion, we make no notational
distinction between a drawing and the underlying abstract graph
$G$. In the same vein, $V(G)$ and $E(G)$ will stand for the vertex
set and edge set of $G$ as well as for the sets of points and
curves representing them.

A drawing of $G$ is called a {\em thrackle} if every pair of edges
meet precisely once, either at a common vertex or at a proper
crossing. (A crossing $p$ of two curves is {\em proper} if at $p$
one curve passes from one side of the other curve to its other
side.) More than {\em forty} years ago Conway~\cite{Conway, BMP,
Ring} conjectured that every thrackle has at most as many edges
as vertices, and offered a bottle of beer for a solution. Since
then the prize went up to a thousand dollars. In spite of
considerable efforts, Conway's thrackle conjecture is still open.
It is believed to represent the tip of an ``iceberg,'' obstructing
our understanding of crossing patterns of edges in topological
graphs. If true, Conway's conjecture would be tight as any cycle
of length at least {\em five} can be drawn as a thrackle, see
\cite{Woodall}. Two thrackle drawings of $C_5$ and $C_6$ are shown
in Figure \ref{fig:c5c6}.

\begin{figure}[h]
\centering
\includegraphics[scale=0.3]{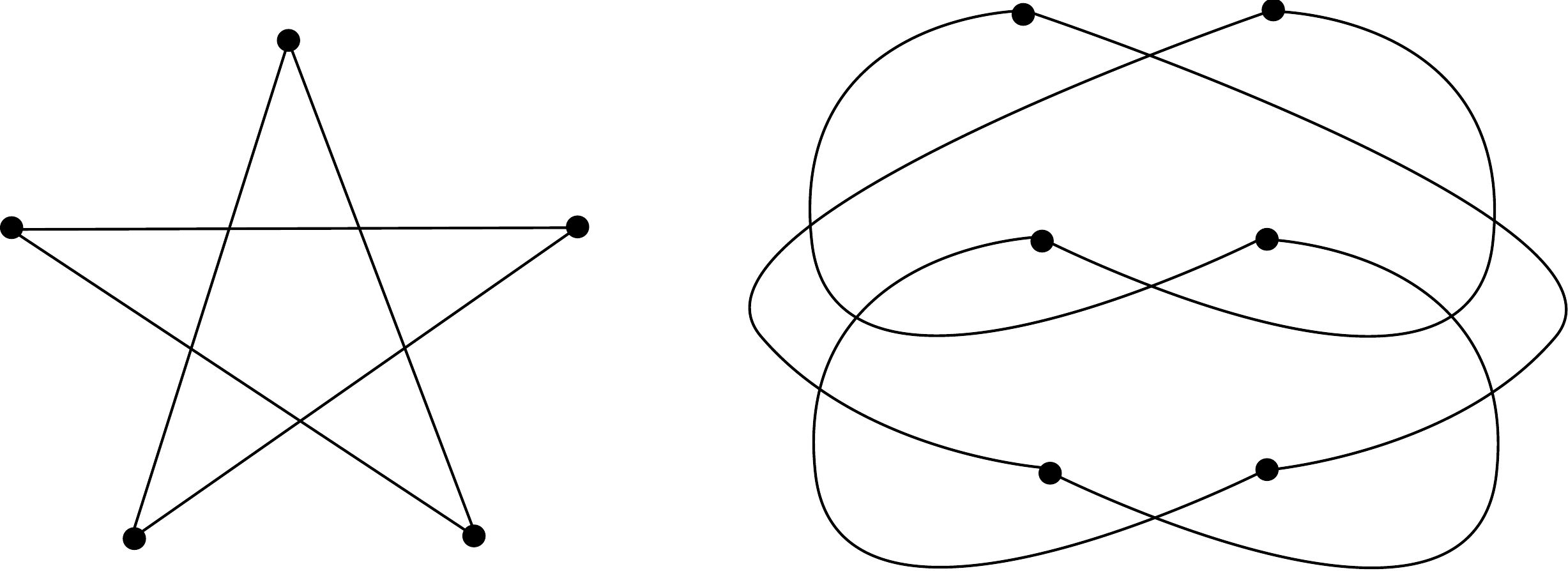}
\caption{$C_5$ and $C_6$ drawn as thrackles} \label{fig:c5c6}
\end{figure}

Obviously, the property that $G$ can be drawn as a thrackle is
{\em hereditary}: if $G$ has this property, then any subgraph of
$G$ does. It is very easy to verify (cf. \cite{Woodall}) that
$C_4$, a cycle of length {\em four}, cannot be drawn in a
thrackle. Therefore, every ``thrackleable" graph is $C_4$-free,
and it follows from extremal graph theory that every thrackle of
$n$ vertices has at most $O(n^{3/2})$ edges \cite{Diestel}. The
first linear upper bound on the maximum number of edges of a
thrackle of $n$ vertices was given by Lov\'asz et
al.~\cite{Lovasz}. This was improved to a $\frac 32(n-1)$ by
Cairns and Nikolayevsky~\cite{Cairns}.

The aim of this note is to provide a finite approximation scheme
for estimating the maximum number of edges that a thrackle of $n$
vertices can have. We apply our technique to improve the best
known upper bound for this maximum.

\smallskip

To state our results, we need a definition. Given three integers
$c',c''>2$, $l\ge 0$, the {\em dumbbell} $\DB(c',c'',l)$ is a
simple graph consisting of two disjoint cycles of length $c'$ and
$c''$, connected by a path of length $l$. For $l=0$, the two
cycles share a vertex. It is natural to extend this definition to
negative values of $l$, as follows. For any $l>-\min(c',c'')$, let
$\DB(c',c'',l)$ denote the graph consisting of two cycles of
lengths $c'$ and $c''$ that share a path of length $-l$. That is,
for any $l>-\min(c',c'')$, we have
$$|V(\DB(c',c'',l))|=c'+c''+l-1.$$
The three types of dumbbells (for $l<0$, $l=0$, and $l>0$) are
illustrated in Figure~\ref{fig:db66}.

\bigskip

\begin{figure}[h]
\centering
\includegraphics[scale=0.5]{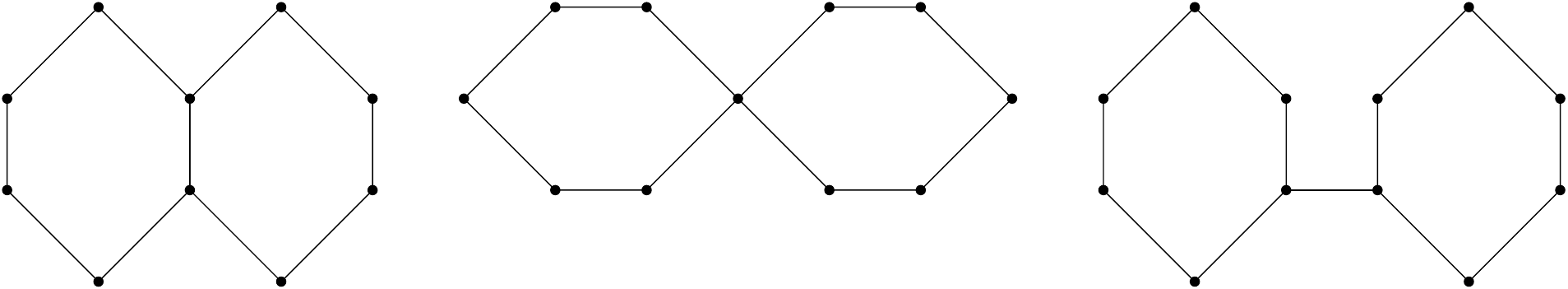}
\caption{Dumbbells $\DB(6,6,-1), \DB(6,6,0),$ and $\DB(6,6,1)$}
\label{fig:db66}
\end{figure}

\medskip

Our first theorem shows that for any $\eps>0$, it is possible to
prove Conway's conjecture up to a multiplicative factor of
$1+\eps$, by verifying that no dumbbell smaller than a certain
size depending on $\eps$ is thrackleable.

\begin{theorem}
\label{thm:Approaching bound} Let $c\geq 6$  and $ l\geq -1$ be two
integers, such that $c$ is even, with the property that no dumbbell $\DB(c',c'',l')$ with
$-c'/2 \leq l'\leq l$ and with even $6 \leq c',c''\leq c$ can be
drawn in the plane as a thrackle. Let $r=\lfloor l/2\rfloor$. Then
the maximum number of edges $t(n)$ that a thrackle on $n$ vertices
can have satisfies $t(n)\le \tau(c,l)n$, where
\begin{displaymath}
  \tau(c,l) = \left\{ \begin{array}{ll}
        \frac{47c^2+116c+80}{35c^2+68c+32}&  {\mbox {if }}  l=-1, \\ \\
        1+\frac{2c^2r+4cr^2+22cr+7c^2+22c+8r^2+24r+16}{2c^2r^2+
            14c^2r+4cr^2 +16cr+24c^2+12c} &  {\mbox {if }} l\ge 0, \\
                \end{array}
                 \right.
\end{displaymath}
as $n$ tends to infinity.
\end{theorem}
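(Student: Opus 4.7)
The proposed approach extends the Cairns--Nikolayevsky method (which established $t(n)\le \frac{3}{2}(n-1)$) by coupling their structural analysis of short cycles in a thrackle with the additional hypothesis that no small even-even dumbbell is thrackleable. Starting from a hypothetical thrackle $G$ with $n$ vertices and $m > \tau(c,l)\,n$ edges, I would first reduce to the case of minimum degree at least $2$; pendant vertices and trees attached to the $2$-core contribute only lower-order terms and do not affect the bound as $n\to\infty$. The goal is then to exhibit inside $G$ a thrackled subgraph isomorphic to a forbidden dumbbell $\DB(c',c'',l')$ with even $6 \le c',c'' \le c$ and $-c'/2 \le l' \le l$, contradicting the hypothesis.

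The main structural input is a ``girth-type'' statement that follows from the hypothesis: since $C_4$ is not thrackleable, every even cycle of $G$ has length at least $6$; moreover, applying the hypothesis contrapositively, any two even cycles of lengths in $[6,c]$ appearing in $G$ must either be joined by a path of length exceeding $l$, or else share a subpath longer than half of the smaller one. Consequently, distinct short even cycles live in disjoint ``territories,'' each consisting of the cycle itself plus a surrounding buffer of $r=\lfloor l/2\rfloor$ vertices along every attached path (the floor arising because a forbidden distance of $l$ can be split symmetrically between two neighbouring territories).

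I would then run an Euler-formula-plus-discharging argument in the Cairns--Nikolayevsky spirit: after a suitable planarization of the thrackle (resolving each crossing by a virtual vertex and pruning trivial structure), assign each vertex $v$ a charge $\deg(v)-2$, so the total charge is $2m-2n$; then transport charge from buffer vertices toward nearby short even cycles. The territoriality statement of the previous paragraph ensures that no vertex is charged to two distinct short cycles, so by bounding the charge absorbed by each short cycle by an explicit polynomial in $c$ and $r$, and summing over cycles, one obtains a linear inequality in $m$ and $n$ that rearranges to $m \le \tau(c,l)\,n + o(n)$.

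The main obstacle is the combinatorial bookkeeping of the discharging step, especially the case split between $l=-1$ and $l\ge 0$. In the first regime only the ``shared-subpath'' restriction is active, yielding a formula governed by $c$ alone; in the second regime the nonempty buffer of length $r$ around each short cycle contributes the additional terms in $\tau(c,l)$. Tuning the redistribution rule so that the worst-case configuration (most likely two $6$-cycles joined by a length-$(l+1)$ path) saturates the inequality is where the elaborate polynomial numerator and denominator in the statement should arise; the matching of these polynomials to the exact form of $\tau(c,l)$ is the delicate part of the argument.
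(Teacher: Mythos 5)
Your territoriality intuition for short even cycles is in the right spirit, but there are two essential gaps that make the proposed argument unworkable, and the actual proof uses machinery you do not invoke.

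First, your ``planarization'' step is the wrong move. Resolving each crossing by a virtual vertex produces a plane graph on $\Theta(m^2)$ vertices, since in a thrackle every pair of non-adjacent edges crosses. Any Euler-formula or discharging inequality you derive on that auxiliary graph relates $m$ to $m^2$, not to $n$, so the accounting cannot close to a bound of the form $m\le \tau\,n$. The paper avoids this entirely: it uses the Cairns--Nikolayevsky strengthening of the Lov\'asz--Pach--Szegedy theorem (Lemma~\ref{lemma:from thrackle to planarity}) that a \emph{bipartite} generalized thrackle can be \emph{redrawn} without crossings as the \emph{same abstract graph}. One then applies Euler's formula to that crossing-free embedding of $G$ itself, where $n$ and $m$ are unchanged. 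The bound $f_c\le m/6$ (for $l=-1$) or $f_c\le n/(2r+6)$ (for $l\ge 0$), derived from the forbidden-dumbbell hypothesis acting on \emph{faces} of this embedding, then combines with $2m\ge 6f_c+(c+2)(f-f_c)$ and Euler to give the claimed linear inequality directly, with no discharging needed.

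Second, your outline never addresses non-bipartite $G$, which is where most of the work lies and where the $\tau(c,l)$ polynomial actually comes from. When $G$ contains an odd cycle $C$, the planarity lemma does not apply. The paper uses Conway's doubling of a shortest odd cycle $C$ to produce a bipartite thrackle $G'$, shows that the doubled cycle $C'$ is a face of the resulting planar embedding, and extracts an upper bound on $m$ that \emph{increases} with $|C|$. It then derives a second, competing bound (in the Lov\'asz--Pach--Szegedy style, by deleting $C$ and its neighbors) that \emph{decreases} with $|C|$, and balances the two over $|C|$ to obtain $\tau(c,l)$. There is also a third case handling the 2-connectivity of $G'$ after doubling, settled by a block-decomposition induction. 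None of this structure appears in your proposal; the ``worst-case configuration'' you guess (two hexagons joined by a path of length $l+1$) is not what determines $\tau(c,l)$---the extremal tradeoff is over the length of the shortest odd cycle, a parameter your argument never introduces.
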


As both $c$ and $l$ get larger, the constant $\tau(c,l)$ given by
the second part of Theorem~\ref{thm:Approaching bound} approaches
$1$. On the other hand, assuming that Conway's conjecture is true
for all bipartite graphs with up to 10 vertices, which will be verified in Section~\ref{proof2nd}, the first part of the theorem applied with
$c=6, l=-1$ yields that $t(n)\le \frac{617}{425}n<1.452n$. This bound is already better than the bound $\frac 32 n$ established in \cite{Cairns}.

By a more careful application of Theorem~\ref{thm:Approaching
bound}, i.e. taking $c=6$ and $l=0$, we obtain an even stronger result.

\begin{theorem}
\label{thm:betterBound} The maximum number of edges $t(n)$ that a
thrackle on $n$ vertices can have satisfies the inequality
$t(n)\le \frac{167}{117}n<1.428n.$
\end{theorem}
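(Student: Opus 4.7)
The plan is to specialize Theorem~\ref{thm:Approaching bound} to the parameters $c=6$, $l=0$, exactly as indicated by the sentence immediately preceding the statement. These values trigger the $l\ge 0$ branch with $r=\lfloor 0/2\rfloor=0$, so every term containing $r$ in the formula for $\tau(c,l)$ vanishes, leaving
\[
  \tau(c,0) = 1 + \frac{7c^2 + 22c + 16}{24c^2 + 12c}.
\]
Substituting $c=6$ gives $\tau(6,0) = 1 + \frac{400}{936} = \frac{167}{117} < 1.428$, matching the stated constant. This arithmetic piece is routine.

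The substance of the proof is verifying the hypothesis of Theorem~\ref{thm:Approaching bound} for these parameters: no dumbbell $\DB(6,6,l')$ with $l'\in\{-3,-2,-1,0\}$ admits a thrackle drawing. All four of these graphs are bipartite, since two even cycles glued along a subpath (possibly of length zero) inherit a compatible proper $2$-coloring. Using $|V(\DB(c',c'',l'))|=c'+c''+l'-1$, their vertex counts are $8$, $9$, $10$, and $11$ respectively. The first three, having at most $10$ vertices, are handled by the computational verification of Conway's conjecture for all bipartite graphs on at most $10$ vertices promised in Section~\ref{proof2nd}.

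The outstanding case is $\DB(6,6,0)$, two hexagons glued at a common vertex, with $11$ vertices and $12$ edges; this is the sole dumbbell lying outside the range covered in Section~\ref{proof2nd}, and I expect it to be the main obstacle. I would dispatch it in one of two ways: either by running the same enumeration algorithm of Section~\ref{proof2nd} on this single additional graph, or via a direct structural argument around the cut vertex $v$. In the latter approach, a hypothetical thrackle of $\DB(6,6,0)$ restricts to a thrackle of each of the two hexagons meeting at $v$, so the local rotation of the four edges incident to $v$ must simultaneously be consistent with a thrackle of $C_6$ in each hexagon; classifying the very constrained rotations of $C_6$-thrackles (cf.\ Figure~\ref{fig:c5c6}) and showing that no two can share a vertex compatibly should yield a contradiction. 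Once all four dumbbells are eliminated, Theorem~\ref{thm:Approaching bound} applies directly and gives $t(n)\le \frac{167}{117}n$.
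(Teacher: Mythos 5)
Your proof matches the paper's: apply Theorem~\ref{thm:Approaching bound} with $c=6$, $l=0$, compute $\tau(6,0)=1+\frac{400}{936}=\frac{167}{117}$, and exclude the four dumbbells $\DB(6,6,l')$, $-3\le l'\le 0$, which is exactly the content of Lemma~\ref{lemma:forbiddenGraphs}. One small correction to the narrative around the verification step: the paper does not prove a blanket statement about all bipartite graphs on at most $10$ vertices; it cites Theorem~5.1 of~\cite{Lovasz} for $\DB(6,6,-3)$ and runs the backtracking/planarity algorithm for $\DB(6,6,-2)$, $\DB(6,6,-1)$, \emph{and} $\DB(6,6,0)$, using Lemma~5.2 of~\cite{Lovasz} (the constrained $C_6$-rotations you invoke) only as a computational speedup --- so the cut-vertex case analysis you sketch for $\DB(6,6,0)$ is in the right spirit but is neither carried out by you nor present in the paper.
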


Our method is algorithmic. We design an $e^{O((1/\eps^2)\ln(1/\eps))}$ time algorithm to prove, for any $\eps>0$, that $t(n)\le (1+\eps)n$ for all $n$, or to exhibit a counterexample to Conway's conjecture. The proof of Theorem~\ref{thm:betterBound} is computer assisted: it requires testing the planarity of certain relatively small graphs.

For thrackles drawn by straight-line edges, Conway's conjecture
had been settled in a slightly different form by Hopf and
Pannwitz \cite{HoP34} and by Sutherland \cite{Su}  before Conway was even born,
 and later, in the above form, by Erd\H os and
Perles. Assuming that Conway's conjecture is true, Woodall
\cite{Woodall} gave a complete characterization of all graphs that
can be drawn as a thrackle. He also observed that it would be
sufficient to verify the conjecture for dumbbells. This
observation is one of the basic ideas behind our arguments.

Several interesting special cases and variants of the conjecture
are discussed in \cite{Cairns, Cairns3, Cairns2, Green, Lovasz,
PePi, PiRi}.

In Section~\ref{doubling}, we describe a crucial construction of Conway and summarize some earlier results needed for our arguments. The proofs of Theorems~\ref{thm:Approaching bound} and~\ref{thm:betterBound} are given in Sections~\ref{proofmain} and~\ref{proof2nd}. The analysis of the algorithm for establishing the $(1+\eps)n$ upper bound for the maximum number of edges that a thrackle of $n$ vertices can have is also given in Section~\ref{proof2nd} (Theorem~\ref{thm:runningTime}). In the last section, we discuss some related Tur\'an-type extremal problems for planar graphs.

\section{Conway's doubling and preliminaries}\label{doubling}

In this section, we review some earlier results that play a key
role in our arguments.

A {\em generalized thrackle} is a drawing of a graph in the plane
with the property that any pair of edges share an odd number of
points at which they properly cross or which are their common
endpoints. Obviously, every thrackle is a generalized thrackle but
not vice versa: although $C_4$ is not thrackleable, it can be
drawn as a generalized thrackle, which is not so hard to see.% (Figure \ref{fig:c4}).

%\bigskip

%\begin{figure}[h]
%\centering
%\includegraphics[scale=0.5]{c4}
%\caption{$C_4$ drawn as generalized thrackle} \label{fig:c4}
%\end{figure}

%\medskip

We need the following simple observation based on the Jordan curve
theorem.

\begin{lemma}
\label{lemma:removing odd cycle} {\rm \cite{Lovasz}} A
(generalized) thrackle cannot contain two vertex disjoint odd
cycles.
\end{lemma}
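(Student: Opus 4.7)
\medskip

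\noindent\textbf{Proof plan.} The plan is to exploit the parity information that a generalized thrackle drawing imposes on how edges cross a fixed odd cycle, and then obtain a contradiction from the fact that odd cycles are not $2$-colorable.

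Suppose for contradiction that the drawing contains two vertex disjoint odd cycles $C_1$ and $C_2$. First I would view $C_1$ as a closed (possibly self-crossing, but in general position) curve in the plane. A standard Jordan-type argument shows that such a curve partitions the plane into finitely many regions that admit a proper $2$-coloring, black and white, in which any two regions sharing an arc of $C_1$ get opposite colors; equivalently, a continuous arc $\gamma$ whose endpoints lie off $C_1$ crosses $C_1$ an odd number of times (counted with multiplicity) if and only if its endpoints lie in regions of opposite color. This is the only topological input needed, and its careful statement (that the coloring is well-defined even though $C_1$ may cross itself) is the step I would write out with the most care, referring to the Jordan curve theorem applied locally around each self-crossing.

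Next I would extract the key parity fact from the generalized thrackle hypothesis. Since $C_2$ is vertex-disjoint from $C_1$, every edge $e$ of $C_2$ shares no endpoint with any edge of $C_1$, so by the generalized thrackle condition $e$ crosses each of the $|E(C_1)|$ edges of $C_1$ an odd number of times. Because $|E(C_1)|$ is odd, the total number of crossings of $e$ with the closed curve $C_1$ is odd. By the coloring statement above, the two endpoints of $e$ must therefore lie in regions of opposite color.

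Finally I would obtain the contradiction. Assign to each vertex $v$ of $C_2$ the color (black or white) of the region of the complement of $C_1$ in which it lies; this is well-defined since $V(C_2)\cap V(C_1)=\emptyset$ and we may assume (by a tiny perturbation) that no vertex of $C_2$ lies on $C_1$. The previous paragraph says that this coloring of $V(C_2)$ is a proper $2$-coloring of the cycle $C_2$. But $C_2$ has odd length and hence is not bipartite, a contradiction. The main obstacle, as noted, is the rigorous justification of the $2$-coloring of the complement of a closed curve that may have self-crossings; once this is granted, the parity-counting argument and the non-bipartiteness of $C_2$ close the proof immediately.
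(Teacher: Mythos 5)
Your proof is correct, and it is the standard argument; note that the paper does not actually prove Lemma~\ref{lemma:removing odd cycle} but simply cites \cite{Lovasz}, introducing it as an ``observation based on the Jordan curve theorem,'' which is exactly the tool you invoke. Your parity-of-crossings / $2$-coloring argument is also precisely the mechanism the paper later reuses in the proof of Lemma~\ref{lemma:conway doubling} (two-coloring the regions bounded by pieces of the odd cycle $C$ and counting crossings of a path with $C$ mod~$2$), so the approach matches what the authors have in mind.
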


Lov\'asz, Pach, and Szegedy~\cite{Lovasz} gave a somewhat
counterintuitive characterization of generalized thrackles
containing no odd cycle: a bipartite graph is a generalized
thrackle if and only if it is {\em planar}. Moreover, it follows
immediately from Lemma 3 and the proof of Theorem 3 in Cairns and
Nikolayevsky~\cite{Cairns} that this statement can be strengthened
as follows.

\begin{lemma}
 \label{lemma:from thrackle to planarity} {\rm \cite{Cairns}} Let $G$ be a bipartite graph with vertex set $V(G)=A\cup B$ and edge set $E(G)\subseteq A\times B$. If $G$ is a generalized thrackle then it can be redrawn in the plane without crossing so that the cyclic order of the edges around any vertex $v\in V(G)$ is preserved if $v\in A$ and reversed if $v\in B$.
 \end{lemma}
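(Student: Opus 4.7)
My approach is to derive Lemma~\ref{lemma:from thrackle to planarity} from the \emph{strong} Hanani--Tutte theorem, which states that any graph drawn in the plane with every two independent edges crossing an even number of times admits a crossing-free drawing realizing the identical rotation system at every vertex. The task then reduces to transforming the given generalized thrackle drawing of $G$ into a drawing that (i) realizes the rotation system specified in the lemma (original at every $a \in A$, reversed at every $b \in B$) and (ii) satisfies the even-parity hypothesis of strong Hanani--Tutte; applying the theorem to that transformed drawing delivers the planar drawing claimed.

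The plan proceeds in four steps. First, I would restate the generalized thrackle condition in $\mathbb{Z}/2$ crossing-parity language: two adjacent edges meet at the common endpoint plus an even number of proper crossings, while two independent edges meet at an odd number of proper crossings. So the given drawing is off from the Hanani--Tutte hypothesis by exactly one parity flip on every independent pair. Second, I would design a local topological surgery at each $b \in B$ that reverses the cyclic order of the edges emanating from $b$; a convenient realization is to excise a small disc around $b$, reflect it, and drag the reflected disc back into position along a generic arc terminating at a fixed basepoint in the unbounded face. The drag picks up extra crossings between the packet of edges at $b$ and every other edge the arc crosses. Third, iterating the surgery over all $b \in B$, I would perform a $\mathbb{Z}/2$-bookkeeping argument using the bipartite structure of $G$---each edge has exactly one endpoint in $B$, each independent pair has two distinct $B$-endpoints, and each adjacent pair has either one or two---to verify that the cumulative parity changes convert the odd crossings on every independent pair into even crossings while preserving the even crossings on every adjacent pair. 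Fourth, I would feed the resulting drawing into the strong Hanani--Tutte theorem.

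The main obstacle is the third step's bookkeeping. A priori the surgery at $b$ does not affect crossings between edges that avoid $b$ entirely, so the independent-pair parities cannot be fixed by purely local moves at the $B$-vertices; the global drag arcs must be chosen so that, summed over all $b \in B$, the parity contributions on each independent pair add to $1 \bmod 2$ and on each adjacent pair add to $0 \bmod 2$. This is exactly where the bipartiteness of $G$ and the generalized thrackle hypothesis enter jointly: bipartiteness controls the combinatorial count of contributions per edge pair, while the thrackle condition supplies the initial odd parity on independent pairs that the surgeries are engineered to cancel. Once this bookkeeping is carried out, the final application of strong Hanani--Tutte completes the proof.
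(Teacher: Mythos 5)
The paper does not prove Lemma~\ref{lemma:from thrackle to planarity} itself; it cites it as a consequence of Lemma~3 and the proof of Theorem~3 in Cairns--Nikolayevsky~\cite{Cairns}, so there is no in-paper argument to compare with. Your overall strategy (transform the drawing into one where every pair of edges crosses evenly, then apply a rotation-preserving Hanani--Tutte theorem) is sound in spirit, but two remarks are in order. First, the rotation-preserving statement you invoke is the \emph{weak} Hanani--Tutte theorem (hypothesis: all pairs, adjacent ones included, cross evenly), not the strong one; the strong Hanani--Tutte theorem (independent pairs even) yields planarity but does not in general preserve the rotation system, precisely because adjacent odd crossings must first be removed by local redrawings that change rotations. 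This is only a mislabeling, since your plan aims for the all-even state anyway.

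Second --- and this is the genuine gap --- the ``reflect $D_b$ and drag it along an arc'' surgery cannot do what you ask of it, and the obstacle is structural rather than unfinished bookkeeping. Reattaching the reflected disc to the unchanged exterior forces a braid near $b$ that reverses the cyclic order of the $\deg(b)$ edge-ends; the crossing parities of pairs of strands in such a braid are forced by the permutation, and for any linearization of a cyclic reversal on $k\geq 3$ elements the number of inversions within any triple $\{e_i,e_j,e_l\}$ is odd. The drag along $\gamma_b$ changes the parity of $(e_i,e_j)$ by $\mathrm{cr}(\gamma_b,e_i)+\mathrm{cr}(\gamma_b,e_j)\pmod 2$, a quantity that sums to $0$ over any triple. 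Hence for $\deg(b)\geq 3$ no choice of $\gamma_b$ cancels the braid's effect on all adjacent-at-$b$ pairs, and these pairs (which are even in a generalized thrackle) come out odd, the opposite of what the weak Hanani--Tutte hypothesis demands. One can neutralize the braid with per-edge loops inside $D_b$, but then the surgery is parity-neutral on every pair of edges, so the independent pairs remain odd and the all-even state is never reached. The upshot is that the required parity flips on independent pairs cannot be manufactured by moves localized at the $B$-vertices in the way you describe; the global half of the transformation is missing, and it is exactly that global half --- driven by bipartiteness and Jordan-curve crossing counts --- that carries the weight in the Lov\'asz--Pach--Szegedy and Cairns--Nikolayevsky arguments.
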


We recall a construction of Conway for transforming a thrackle
into another one. It can be used to eliminate odd cycles.

Let $G$ be a thrackle or a generalized thrackle that contains an
{\em odd} cycle $C$. In the literature, the following procedure is
referred to as {\em Conway's doubling}: First, delete from $G$ all
edges incident to at least one vertex belonging to $C$, including
all edges of $C$. Replace every vertex $v$ of $C$ by two nearby
vertices, $v_1$ and $v_2$. For any edge $vv'$ of $C$, connect
$v_1$ to $v'_2$ and $v_2$ to $v'_1$ by two edges running very
close to the original edge $vv'$, as depicted in Figure
\ref{fig:Conway doubling}. For any vertex $v$ belonging to $C$,
the set of edges incident to $v$ but not belonging to $C$ can be
divided into two classes, $E_1(v)$ and $E_2(v)$: the sets of all
edges whose initial arcs around $v$ lie on one side or the other
side of $C$. In the resulting topological graph $G'$, connect all
edges in $E_1(v)$ to $v_1$ and all edges in $E_2(v)$ to $v_2$ so
that every edge connected to $v_1$ crosses all edges connected to
$v_2$ exactly once in their small neighborhood. See Figure
\ref{fig:Conway doubling}. All other edges of $G$ remain
unchanged. Denote the vertices of the original odd cycle $C$ by
$v^1, v^2,\ldots, v^k$, in this order. In the resulting drawing
$G'$, we obtain an {\em even} cycle $C'=v^1_1
v^2_2v^3_1v^4_2\ldots v^1_2v^2_1v^3_2v^4_1\ldots$ instead of $C$.
It is easy to verify that $G'$ is drawn as a thrackle, which is
stated as part (ii) of the following lemma (see also Lemma 2
in~\cite{Cairns}).

\bigskip

\begin{figure}[h]
\centering
\includegraphics[scale=0.5]{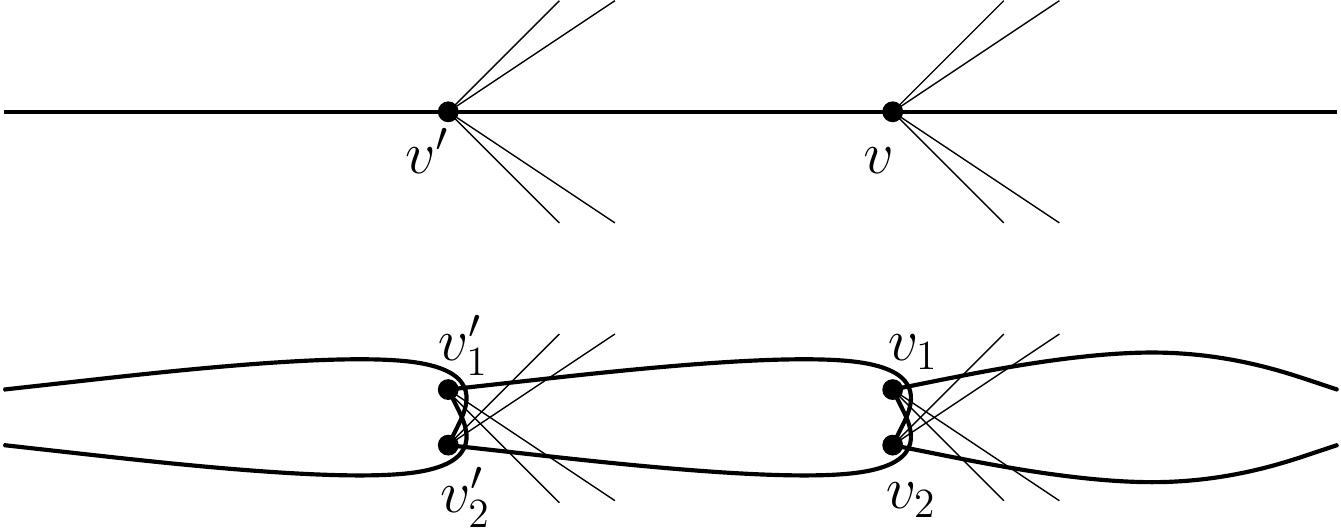}
\caption{Conway's doubling of a cycle} \label{fig:Conway doubling}
\end{figure}

\medskip

\begin{lemma}
 \label{lemma:conway doubling} {\rm (Conway, \cite{Woodall, Cairns})}
 Let $G$ be a (generalized) thrackle with at least one odd cycle $C$. Then the topological graph $G'$ obtained from $G$ by Conway's doubling of $C$ is

 (i) bipartite, and

 (ii) a (generalized) thrackle.
 \end{lemma}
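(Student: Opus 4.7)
I would address the two claims separately; (ii) is a local case analysis, while the conceptual heart of the lemma is (i).

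\emph{Part (ii).} I would verify that every pair of edges of $G'$ meets exactly once (or, for the generalized case, an odd number of times) by classifying the edges of $G'$ into three types and tracing each pair back to its ``parent'' pair in $G$. The types are: (a) edges with no endpoint in $V(C)$, which are identical to edges of $G$ and so retain their crossings; (b) the $2k$ edges of the new cycle $C'$, each routed close to an edge of $C$; and (c) edges between $V(G)\setminus V(C)$ and some $v^i_j$, obtained by rerouting an edge $wv^i\in E(G)$ with $j\in\{1,2\}$ prescribed by whether $wv^i \in E_1(v^i)$ or $E_2(v^i)$. The only genuinely new meetings are local to each $v^i$: two edges formerly sharing the endpoint $v^i$ in $G$ either still share an endpoint (namely $v^i_j$, when they both lie in $E_j(v^i)$), or are rerouted to distinct endpoints $v^i_1$ and $v^i_2$ and are forced by the construction to cross exactly once in a small neighborhood of $v^i$. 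An analogous local check handles pairs of edges of $C'$ and pairs consisting of one edge of $C'$ and one rerouted edge. These local replacements preserve the ``meet once'' (or ``meet oddly'') condition everywhere.

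\emph{Part (i).} Since $|C|=k$ is odd, the new cycle $C'$ has length $2k$, and a direct inspection of the cyclic order $v^1_1\,v^2_2\,v^3_1\,v^4_2\ldots v^1_2\,v^2_1\ldots$ shows that $\chi(v^i_j) := j \bmod 2$ is a proper $2$-coloring of $C'$. I would extend $\chi$ to $V(G')$ by coloring each $w \in V(G)\setminus V(C)$ with $1-j$, where $j$ is the subscript of \emph{any} one of its neighbors $v^i_j$ in $G'$. The consistency of this extension reduces to the following claim: for every $w\in V(G)\setminus V(C)$ and any two edges $wv^i,\,wv^{i'}\in E(G)$, the reroutings land on $v^i_j$ and $v^{i'}_{j'}$ with $j=j'$. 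This is a parity statement. In the thrackle $G$ (the generalized case is analogous), the edge $wv^i$ shares the endpoint $v^i$ with the two edges of $C$ incident to $v^i$ and crosses each of the remaining $k-2$ edges of $C$ exactly once; hence $wv^i$ crosses $C$ a total of $k-2$ times, which is odd. Encoding this parity via a $\mathbb{Z}_2$-valued ``side'' $\sigma(p)$ defined at each point $p$ outside $C$ (handled carefully because $C$ is a self-intersecting closed curve) shows that the subscript $j\in\{1,2\}$ to which $wv^i$ attaches depends only on $\sigma(w)$ and not on $i$, so $w$ is consistently colored. Finally, any connected component of $G'$ disjoint from $V(C')$ is a subgraph of $G\setminus V(C)$, which by Lemma~\ref{lemma:removing odd cycle} contains no odd cycle (it would be vertex-disjoint from $C$), and so is bipartite.

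\emph{Main obstacle.} The crux is the parity argument in part (i): translating ``$wv^i$ crosses $C$ an odd number of times'' into a globally well-defined side $\sigma(w)$, and hence a uniquely determined subscript $j$, requires some care because the complement of $C$ in the plane has several connected components due to the self-crossings of $C$. Part (ii) is conceptually routine but involves a thorough local case check in the neighborhoods of the $v^i$ and along each edge of $C'$ to confirm that the doubling introduces exactly the right new crossings and no extraneous ones.
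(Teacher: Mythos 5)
Your route to part~(i) is genuinely different from the paper's. The paper argues by contradiction: it takes an odd cycle $D'$ of $G'$ with the fewest edges outside $C'$, writes it as $P_1\cup P_2$ with $P_1\subseteq C'$, and derives a parity contradiction between the position of the endpoint arcs of $\pi(P_2)$ relative to $C$ computed two ways --- once from the parity of $|P_1|$ and the doubling rules, once from the fact (Lemma~2.2 of Lov\'asz--Pach--Szegedy) that an odd path meets an odd generalized-thrackle cycle an odd number of times. You instead build a $2$-coloring directly; this is essentially the constructive dual of the paper's argument, resting on the same parity facts, and it is arguably cleaner.

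However, your $2$-coloring as written does not reach all of $G'$. You color $C'$ and the neighbors of $C'$, and you invoke Lemma~\ref{lemma:removing odd cycle} for components of $G'$ disjoint from $C'$; but vertices in the $C'$-component at distance at least $2$ from $C'$ are never assigned a color, and properness on edges joining two vertices of $V(G)\setminus V(C)$ is never checked. The missing step is to take $\sigma$ itself as the coloring: set $\chi(w):=\sigma(w)$ for all $w\in V(G)\setminus V(C)$. An edge $ww'$ with both ends off $C$ shares no vertex with $C$, so by the (generalized) thrackle property it properly crosses each of the $k$ edges of $C$ an odd number of times and hence crosses $C$ an odd total number of times; therefore $\sigma(w)\ne\sigma(w')$. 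Together with your ``$k-2$ is odd'' observation at the boundary of $C'$, this yields a proper $2$-coloring of all of $G'$. One further point you flagged but should spell out: the identification of ``local left/right at $v^i$'' (which determines the subscript $j$) with a globally consistent $\sigma$-value holds because each edge $v^iv^{i+1}$ of $C$ has exactly $k-3$ proper crossings with the rest of $C$, an even number, so the correspondence between local side and $\sigma$-color does not flip as one moves from $v^i$ to $v^{i+1}$.
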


\begin{proof}
It remains to verify part (i). Let $k$ denote the length of the
(odd) cycle $C\subseteq G$, and let $C'$ stand for the doubled
cycle in $G'$. The length of $C'$ is $2k$. Let $\pi$ denote the
inverse of the doubling transformation. That is, $\pi$ identifies
the opposite pairs of vertices in $C'$, and takes $C'$ into $C$.

Suppose for a contradiction that $G'$ is not bipartite. In view of
Lemma \ref{lemma:removing odd cycle}, no odd cycle of $G'$ is
disjoint from $C'$. Let $D'$ be an odd cycle in $G'$ with the
smallest number of edges that do not belong to $C'$. We can assume
that $D'$ is the union of two paths, $P_1$ and $P_2$, connecting
the same pair of vertices $u,v$ in $C'$, where $P_1$ belongs to
$C'$ and $P_2$ has no interior points on $C'$.

If $\pi(u)\not=\pi(v)$, that is, the length of $P_1$ is not $0$ or
$k$, then $\pi(D')=\pi(P_1)\cup\pi(P_2)$ is a simple cycle in $G$.
Notice that the lengths of $P_1$ and $P_2$ have different
parities. If the length of $P_1$ is even, say, then, according to
the rules of doubling, the initial and final pieces of $P_2$ in
small neighborhoods of $u$ and $v$ are on the same side of the
(arbitrarily oriented) cycle $C'$. Consequently, the initial and
final pieces of $\pi(P_2)$ in small neighborhoods of $\pi(u)$ and
$\pi(v)$ are on the {\em same} side of $C$. On the other hand,
using the fact that $G$ is a generalized thrackle, the total
number of intersection points between the odd path $\pi(P_2)$ and
the odd cycle $C$ is odd (see the proof of Lemma 2.2 from
\cite{Lovasz}). Thus, if we two color the regions of the plane
bounded by pieces of $C$, so that any pair of neighboring regions receive different colors,  the initial and final pieces of
$\pi(P_2)$ in small neighborhoods of $\pi(u)$ and $\pi(v)$ must
lie in the regions colored with different colors. Since $C$ is odd and drawn as a generalized thrackle, it follows that 
the initial and final pieces of
$\pi(P_2)$ in small neighborhoods of $\pi(u)$ and $\pi(v)$ must lie on {\em different} sides of $C$, a contradiction.

The cases when $P$ is odd and when $\pi(u)=\pi(v)$ can be treated
analogously.
\end{proof}

Finally, we recall an observation of Woodall \cite{Woodall}
mentioned in the introduction, which motivated our investigations.

As thrackleability is a hereditary property, a minimal
counterexample to the thrackle conjecture must be a connected
graph $G$ with exactly $|V(G)|+1$ edges and with no vertex of
degree {\em one}. Such a graph $G$ is necessarily a dumbbell
$\DB(c',c'',l)$. If $l\not=0$, then $G$ consists of two cycles
that share a path or are connected by a path $uv$. In both cases,
we can ``double'' the path $uv$, as indicated in Figure
\ref{fig:singledoubling}, to obtain another thrackle $G'$. It is
easy to see that $G'$ is a dumbbell consisting of two cycles that
share precisely one vertex (the vertex $v$ in the figure). Moreover, if any of these two cycles
is not even, then we can double it and repeat the above procedure,
if necessary, to obtain a dumbbell $\DB(b',b'',0)$ drawn as a
thrackle, where $b'$ and $b''$ are even numbers.

Thus, in order to prove the thrackle conjecture, it is enough to
show that no dumbbell $\DB(c',c'',0)$ consisting of two even
cycles that share a vertex is thrackleable.

\bigskip

\begin{figure}[h]
\centering
\includegraphics[scale=0.5]{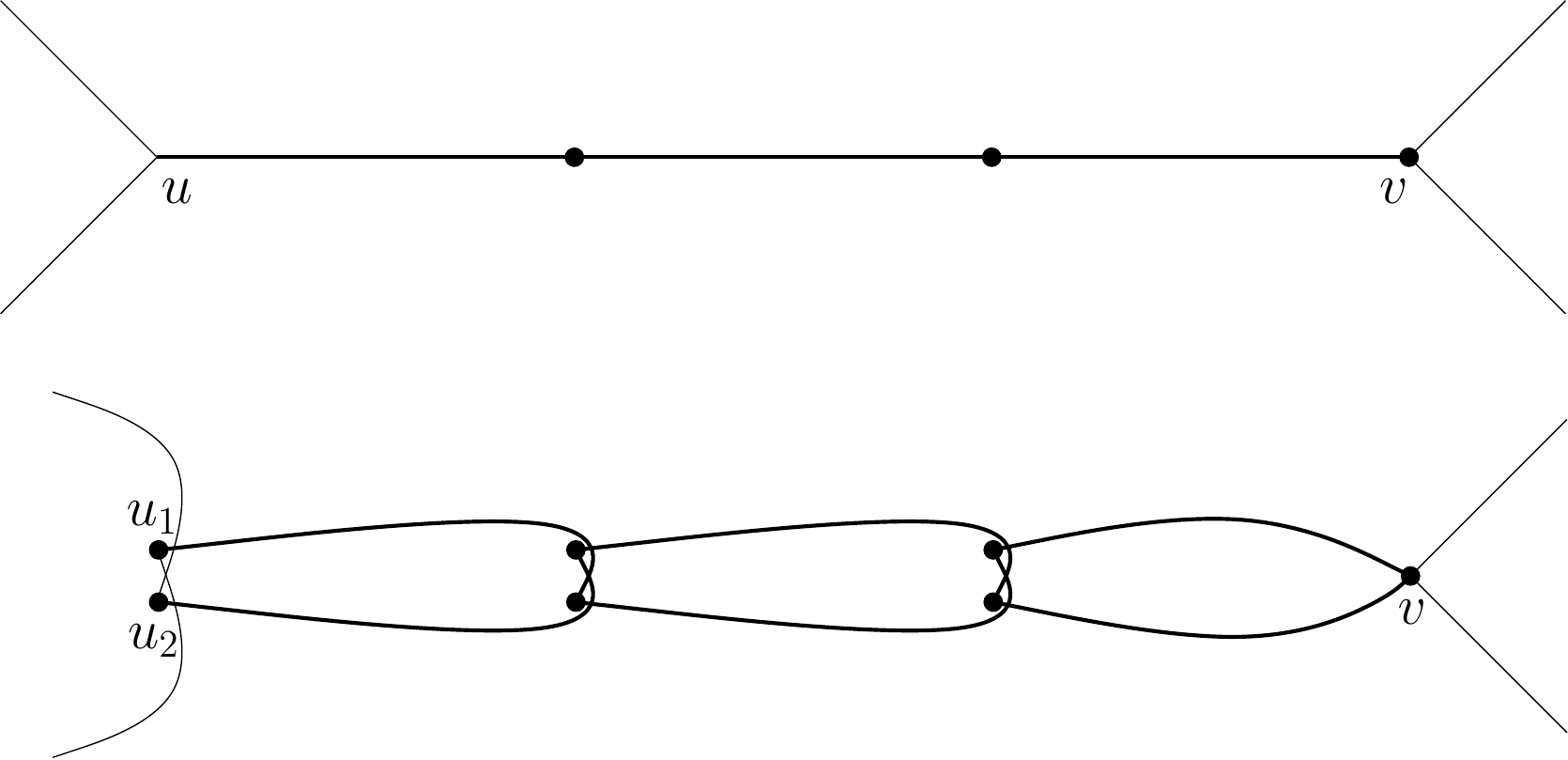}
\caption{Doubling the path $uv$} \label{fig:singledoubling}
\end{figure}

\medskip
 \section{Proof of Theorem \ref{thm:Approaching bound} }\label{proofmain}

Let $c\geq 6$ and $ l\geq -1$ be two integers, and suppose that no
dumbbell $\DB(c',c'',l')$ with $-c'/2 \leq l'\leq l$ and with even
$6 \leq c',c''\leq c$ can be drawn in the plane as a thrackle. For
simpler notation, let $r=\lfloor l/2\rfloor$.

Let $G=(V,E)$ be a thrackleable graph with $n$ vertices and $m$
edges. We assume without loss of generality that $G$ is connected
and that it has no vertex of degree {\em one}. Otherwise, we can
successively delete all vertices of degree {\em one}, and argue
for each connected component of the resulting graph separately.

As usual, we call a graph {\em two-connected} if it is connected and
it has no {\em cut vertex}, i.e., it cannot be separated into two
or more parts by the removal of a vertex \cite{Diestel}.

We distinguish three cases:

\begin{enumerate}[(A)]
\item
$G$ is bipartite;
\item
$G$ is not bipartite, and the graph $G'$ obtained by performing
Conway's doubling of a shortest odd cycle $C\subset G$ is
2-connected;
\item
$G$ is not bipartite, and the graph $G'$ obtained by performing
Conway's doubling of a shortest odd cycle $C\subset G$ is
not 2-connected.
\end{enumerate}
In each case, we will prove that $m\le\tau(c,l)n$.  
%\begin{enumerate}[(A)]
%\item
\medskip

(A) {By Lemma \ref{lemma:from thrackle to planarity}, in this case $G$
is planar. We fix an embedding of $G$ in the plane. According to
the assumption of our theorem, $G$ contains no subgraph that is a
dumbbell $\DB(c',c'',l')$, for any even $6 \leq c'\leq c''\leq c$,
and $-c'/2 \leq l'\leq l$. We also know that $G$ has no $C_4$. We
are going to use these conditions to bound the number of edges
$m=|E(G)|$.

Notice that we also exclude dumbbells
$\DB(c',c'',l')$ with $-c'\leq l'<-c'/2$. Indeed, in this case $\DB(c',c'',l')$ is isomorphic to
$\DB(c',d, k)$, where $d=(c'+c''+2l')$, $k=(-c'-l')$, and $d<c''\le
c$, $\max(-c'/2,-d/2)\le k<0$.  		

%Similar argument shows that there is no need to exclude pairs of cycles
%such that the edges shared by them do not form a connected arc
%along one of them.

Suppose first that $G$ is {\em two-connected}. Let $f$ denote the number of faces, and let $f_c$ stand for the number of faces with at most $c$ sides. By double counting the edges, we obtain

\begin{equation}
\label{eqn:Approaching bound100} 2m\geq 6f_c+ (c+2)(f-f_c).
\end{equation}

If $l=-1$, then applying the condition on forbidden dumbbells, we
obtain that no two faces of size at most $c$ share an edge, so
that $6f_c\le m$. If $l\ge 0$, Menger's theorem implies that any
two faces of size at most $c$ are connected by two vertex disjoint
paths. Since any such path must be longer than $l$, to each face
we can assign its vertices as well as the $r=\lfloor l/2\rfloor$
closest vertices along two vertex disjoint paths leaving the face,
and these sets are disjoint for distinct faces. Thus, we have
$f_c(2r+6)\le n$. In either case, we have

\begin{equation}
\label{eqn0}
f_c \leq \left\{ \begin{array}{ll}
                \frac{m}{6} &   {\mbox {if }}   l=-1, \\ \\
                \frac{n}{2r+6} &  {\mbox {if }}    l\geq 0. \\
                \end{array}
                 \right.
\end{equation}

Combining the last two inequalities, we
obtain

\begin{displaymath}\label{eqn1}
f \le \frac{(c-4)f_c+2m}{c+2}\leq
\left\{ \begin{array}{ll}
                 \frac{(c-4)\frac{m}{6}+2m}{c+2} &  {\mbox {if }}   l=-1,
                 \\  \\
                 \frac{(c-4)\frac{n}{2r+6}+2m}{c+2} &  {\mbox {if }}   l\geq 0. \\
                \end{array}
                 \right.
\end{displaymath}

In view of Euler's polyhedral formula $m+2=n+f$, which yields

\begin{equation}
\label{eqn:Approaching bound1}
  m \leq \left\{ \begin{array}{ll}
                \frac{6c+12}{5c+4}n-\frac{12c+24}{5c+4}&  {\mbox {if }}   l=-1,
                \\ \\
                \frac{2cr+4r+7c+8}{2cr+6c}n - \frac{2c+4}{c} &  {\mbox {if }}   l\geq 0. \\
                \end{array}
                 \right.
\end{equation}

It can be shown by routine calculations that the last estimates,
even if we ignore their negative terms independent of $n$, are
stronger than the ones claimed in the theorem. (In fact, they are
also stronger than the corresponding bounds (\ref{eqn:Approaching
bound7}) and (\ref{eqn:Approaching bound5}) in Case (B); see
below.) This concludes the proof of the case (A) when $G$ is 2-connected.

If $G$ is not 2-connected, then consider a block decomposition of
$G$, and proceed by induction on the number of blocks. The base case, i.e
when $G$ is 2-connected, is treated above. Otherwise $G$ can be obtained as a union of
two bipartite graphs $G_1=(V_1, E_1)$ and $G_2=(V_2,E_2)$ sharing exactly one vertex. By induction hypothesis 
we can use (\ref{eqn:Approaching bound1}) to bound the number of edges in $G_i$, for $i=1,2$,
by substituting $|E_i|$ and $|V_i|$ for $m$ and $n$, respectively.
We obtain the claimed bound on the maximum number of edges in $G$ by adding up the bounds on $|E_1|$ and $|E_2|$ as follows.

$$|E(G)| = |E(G_1)|+|E(G_2)|\leq k_1|V(G_1)|+k_1|V(G_2)|-2k_2=k_1|V(G)|+k_1-2k_2$$ where $k_1=k_1(c,l)$ and $k_2=k_2(c,l)$ represent 
the constants in (\ref{eqn:Approaching bound1}).
 Induction goes through, because $k_1<k_2$ for all considered values of $c$ and $l$.\\
\label{case:A}
}

%\item
{\label{case:B}
(B) In this case, we establish two upper bounds on the maximum number
of edges in $G$: one that decreases with the length of the shortest
odd cycle $C\subseteq G$ and one that increases. Finally, we
balance between these two bounds.

By doubling a shortest odd cycle $C\subseteq G$, as before, we obtain a bipartite thrackle $G'$ (see Lemma~\ref{lemma:conway doubling}). Let $C'$ denote the doubled cycle in $G'$. By Lemma \ref{lemma:from thrackle to planarity}, $G'$ is a two-colorable planar graph. Moreover, it can be embedded in the plane without crossing so that the cyclic order of the edges around each vertex in one color class is preserved, and for each vertex in the other color class reversed. A closer inspection of the way how we double $C$ shows that as we traverse $C'$ in $G'$, the edges incident to $C'$ start on alternating sides of $C'$. This implies that, after redrawing $G'$ as a plane graph, all edges incident to $C'$ lie on one side, that is, $C'$ is a {\em face}.

Slightly abusing the notation, from now on let $G'$ denote a crossing-free drawing with the above property, which has a $2|C|$-sided face $C'$. Denoting the number of vertices and edges of $G'$ by $n'$ and $m'$, the number of faces and the number of faces of size at most $c$ by $f'$ and $f'_c$, respectively, we have $n'=n+|C|=|V(G')|$,
$m'=m+|C|=|E(G')|$, and, as in Case (A), inequality~(\ref{eqn0}),

\begin{displaymath}
\label{eqn:Approaching bound2}
  f_c' \leq \left\{ \begin{array}{ll}
                \frac{1}{6}m' &   {\mbox {if }}   l=-1,\\ \\
                \frac{n'}{2r+6} &  {\mbox {if }}    l\geq 0. \\
                \end{array}
                 \right.
\end{displaymath}

Double counting the edges of $G'$, we obtain
\begin{displaymath}
%\label{eqn:Approaching bound3}
2m'\geq 6f_c'+ (c+2)(f'-1-f_c') + 2|C|.
\end{displaymath}

In case $l\geq 0$, combining the last two inequalities, we have
\begin{displaymath}
%\label{eqn:Approaching bound4}
 f' \leq \frac{(c-4)f_c'+ 2(m'-|C|)+c+2}{c+2}  \leq \frac{(c-4)\frac{n'}{2r+6}+2(m'-|C|)+c+2}{c+2}.
\end{displaymath}

By Euler's polyhedral formula, $f'=m'-n'+2$. Thus, after ignoring the negative term, which depends only on $c$ and $l$, the last inequality yields
\begin{equation}
\label{eqn:Approaching bound5} |E(G)|\leq \frac{2cr + 4r+ 7c+8}{2cr+
6c}n+|C|\frac{c-4}{2cr+ 6c}.
\end{equation}

The case $l=-1$ can be treated analogously, and the corresponding
bound on $E(G)$ becomes
\begin{equation}
\label{eqn:Approaching bound7} |E(G)|\leq \frac{ 6c +
12}{5c+4}n+|C|\frac{c-4}{5c+4}.
\end{equation}

We now establish another upper bound on the number of edges in $G$: one that decreases with the length of the shortest odd cycle $C$ in $G$. As in \cite{Lovasz}, we remove from $G$ the vertices of $C$ together with all edges incident to them. Let $G''$ denote the resulting thrackle. By Lemma \ref{lemma:removing odd cycle}, $G''$ is bipartite. By Lemma \ref{lemma:from thrackle to planarity}, it is a planar graph. From now on, let $G''$ denote a fixed (crossing-free) embedding of this graph. According to our assumptions, $G''$ has no subgraph isomorphic to $\DB(c',c'',l')$, for any even numbers $c'$ and $c''$ with $6 \leq c'\leq c''\leq c$, and for any integer $l'$ with $-c'/2 \leq l'\leq l$.

We can bound $|E(G'')|$, as follows. By the minimality of $C$, each vertex $v\in V(G)$ that does not belong to $C$ is joined by an edge of $G$ to at most one vertex on $C$. Indeed, otherwise, $v$ would create either a $C_4$ or an odd cycle shorter than $C$. Hence, if $l\geq 0$, inequality (\ref{eqn:Approaching bound1}) implies that
\begin{equation}
\label{eqn:Approaching bound11}
 |E(G)| \leq |E(G'')| + |C| + (n-|C|) \leq \frac{2cr+ 4r+7c+8}{2cr+6c}(n-|C|)+n.
\end{equation}
In the case $l=-1$, we obtain
\begin{equation}
\label{eqn:Approaching bound12}
 |E(G)| \leq |E(G'')| + |C| + (n-|C|) \leq \frac{6c+12}{5c+4}(n-|C|)+n.
\end{equation}

It remains to compare the above upper bounds on $|E(G)|$ and to optimize over the value of $|C|$. If $l>-1$, then the value of $|C|$ for which the right-hand sides of (\ref{eqn:Approaching bound5}) and (\ref{eqn:Approaching
bound11}) coincide is
\begin{displaymath}
   |C|= \frac{cr +3c}{cr+2r+ 4c+2}n.
\end{displaymath}
The claimed bound follows by plugging this value into
(\ref{eqn:Approaching bound5}) or (\ref{eqn:Approaching bound11}).

In the case $l=-1$, the critical value of $|C|$, obtained by
comparing the bounds (\ref{eqn:Approaching bound7})
and (\ref{eqn:Approaching bound12}), is
\begin{displaymath}
 |C|= \frac{5c + 4}{7c+8}n.
\end{displaymath}
Plugging this value into
(\ref{eqn:Approaching bound7}) or (\ref{eqn:Approaching bound12}),
the claimed bound follows.}\\

%\item
(C) {As before, let $C$ be a shortest odd cycle in $G$, and let $G'$ be the graph obtained from $G$ after doubling $C$. The doubled cycle is denoted by $C'\subset G'$. Let $G_0\supseteq C$ denote a {\em maximal} subgraph of $G$, which is turned into a {\em two}-connected subgraph of $G'$ after performing Conway's doubling on $C$. Let $G_1$ stand for the graph obtained from $G$ by the removal of all {\em edges} in $G_0$.

It is easy to see that $G_1$ is bipartite, and each of its connected components shares exactly one vertex with $G_0$. Indeed, if a connected component $G_2\subseteq G_1$ were not bipartite, then, by Lemma \ref{lemma:removing odd cycle}, $G_2$ would share at least one vertex with $C$, which belongs to an odd cycle of $G_2$. By the maximal choice of $G_0$, after doubling $C$, the component $G_2$ must turn into a subgraph $G_2'\subset G'$, which shares precisely {\em one} vertex with the doubled cycle $C'$. Thus, $G_2$ must also share precisely {\em one} vertex with $C$, which implies that $G_2'\subseteq G'$ has an odd cycle. This contradicts Lemma \ref{lemma:conway doubling}(i), according to which $G'$ is a bipartite graph.

Therefore, $G_1$ is the union of all blocks of $G$, which are not entirely  contained in $G_0$. Since each connected component $G_2$ of $G_1$ is bipartite, the number of edges of $G_2$ can be bounded from above by (\ref{eqn:Approaching bound1}), just like in Case (A).

In order to bound the number of edges of $G$, we proceed by adding
the connected components of $G_1$ to $G_0$, one by one. As at the end of the discussion of Case~(\ref{case:A}), using the fact that
 the last terms in (\ref{eqn:Approaching bound1}), which do not depend on $n$, are smaller than $-2$, we can complete
 the proof by induction on the number of connected components of $G_1$.}
%\end{enumerate}

\label{sec:The bound approaching}

\section{A better upper bound}\label{proof2nd}

As was pointed out in the Introduction, if we manage to prove that for any $l',\;-3 \leq l'\leq -1$, the dumbbell $\DB(6,6,l')$ is not thrackleable, then Theorem \ref{thm:Approaching bound} yields that the maximum number of edges that a thrackle on $n$ vertices can have is at most $\frac{617}{425}n<1.452n$. This estimate is already better than the currently best known upper bound $\frac{3}{2}n$ due to Cairns and Nikolayevsky \cite{Cairns}.

In order to secure this improvement, we have to exclude the subgraphs
$\DB(6,6,-1)$, $\DB(6,6,-2)$, and $\DB(6,6,-3)$. The fact that $\DB(6,6,-3)$ cannot be drawn as a thrackle was proved in \cite{Lovasz} (Theorem 5.1).
Here we present an algorithm that can be used for checking whether a ``reasonably'' small graph $G$ can be drawn as a thrackle. We applied our algorithm to verify that $\DB(6,6,-1)$ and $\DB(6,6,-2)$ are indeed not thrackleable. In addition, we show that $\DB(6,6,0)$ cannot be drawn as thrackle, which leads to the improved bound in Theorem \ref{thm:betterBound}.

\medskip

\begin{figure}[h]
\centering
\includegraphics[scale=0.4]{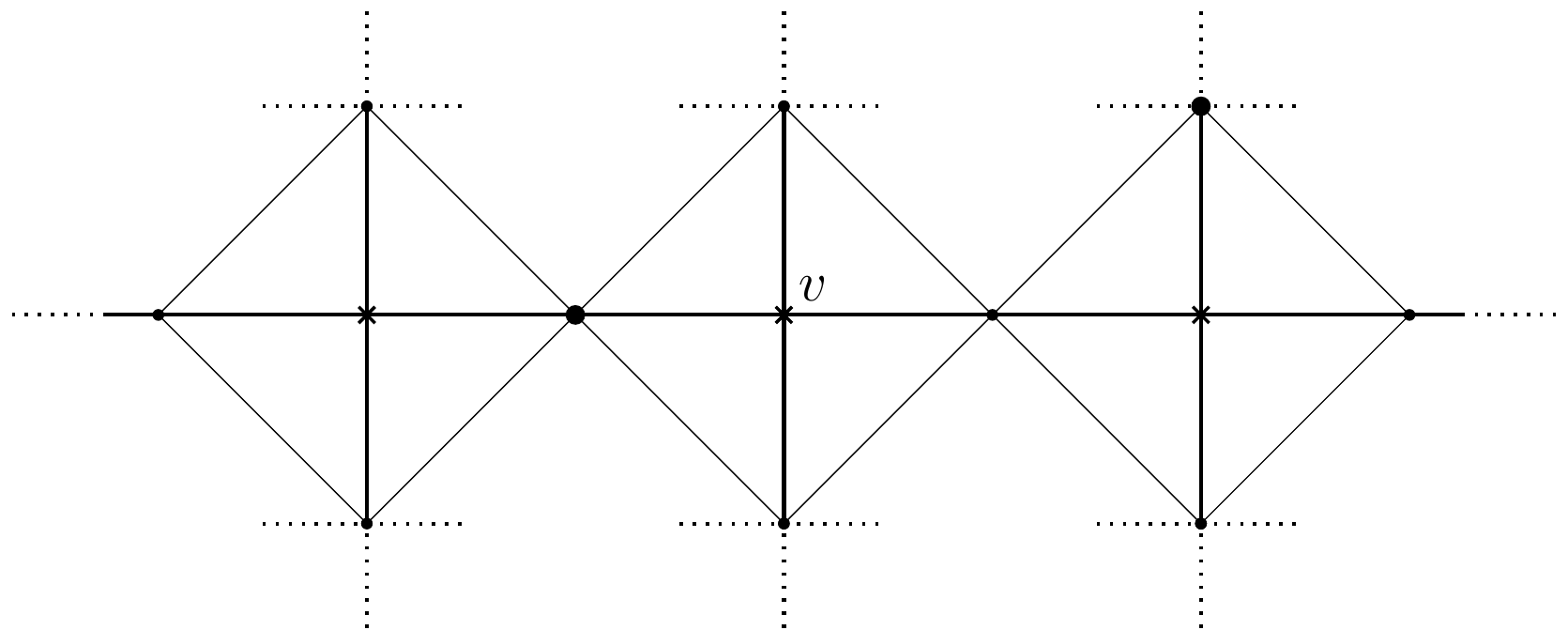}
\caption{4-cycle around a vertex $v$ of $G'$, which was a crossing point in $G$}
\label{fig:crossing}
\end{figure}

Let $G=(V,E)$ be a thrackle. Direct the edges of $G$ arbitrarily.
 For any $e\in E$, let $E_e\subseteq E$ denote the set of all edges
 of $G$ that do not share a vertex with $e$, and let $m(e)=|E_e|$.
 Let $\pi_e=(\pi_e(1), \pi_e(2), \ldots, \pi_e(m(e))$ stand for the $m(e)$-tuple
 (permutation) of all edges belonging to $E_e$, listed in the order of their crossings
along $e$.

Construct a planar graph $G'$ from $G$, by introducing a new vertex at each 
crossing between a pair of edges of $G$, and replacing each edge by its pieces.
 In order to avoid $G'$ having an embedding in which two paths corresponding to
 a crossing pair of edges of $G$ do not {\em properly} cross, we introduce a new
 vertex in the interior of every edge of $G'$, whose both endpoints are former crossings.
 For each former crossing point $v$, we add a cycle of length {\em four} to $G'$, connecting
 its neighbors in their cyclic order around $v$, as illustrated in Figure \ref{fig:crossing}.
 In the figure, the thicker lines and points represent edges and vertices or crossings of $G$,
 while the thinner lines and points depict the {\em four}-cycles added at the second stage.

Obviously, $G'$ is completely determined by the directed abstract underlying graph of $G$ and by the set of permutations $\Pi(G):= \{ \pi_e \in E_e^{m(e)}| e\in E\}$. Thus, a graph $G=(V,E)$ can be drawn as a thrackle if and only if there exists a set $\Pi$ of $|E|$ permutations of
$E_e, e\in E,$ such that the abstract graph $G'$ corresponding to the pair $(G,\Pi)$ is {\em planar}. In other words,
to decide whether a given abstract graph $G=(V,E)$ can be drawn as a thrackle, it is enough to consider all possible
sets of permutations $\Pi$ of $E_e, e\in E$, and to check if the corresponding graph $G'=G'(G,\Pi)$ is planar for at least one of them.
The first deterministic linear time algorithm for testing planarity was found by Hopcroft and Tarjan \cite{hopcroft}.
However, in our implementation we
used an improved algorithm for planarity testing by Fraysseix et al. \cite{Fraysseix}, in particular,
its implementation in the library P.I.G.A.L.E. \cite{Pigale}.
We leave the pseudocode of our routine for the abstract. The source code can be found here : 
http://dcg.epfl.ch/webdav/site/dcg/users/183292/public/Thrackle.zip.

It was shown in $\cite{Lovasz}$ (Lemma 5.2) that in every drawing of a directed cycle $C_6$ as a thrackle, either every oriented path $e_1e_2e_3e_4$ is drawn in such a way that $\pi_{e_1}=(e_4,e_3)$ and $\pi_{e_4}=(e_1,e_2)$, or every oriented path $e_1e_2e_3e_4$ is drawn in such a way that $\pi_{e_1}=(e_3,e_4)$ and $\pi_{e_4}=(e_2,e_1)$. Using this observation (which is not crucial, but saves computational time), we ran a backtracking algorithm to rule out the existence of a set of permutations $\Pi$,
for which $G'(\DB(6,6,0),\Pi)$, $G'(\DB(6,6,-1),\Pi)$, or $G'(\DB(6,6,-2),\Pi)$ is planar.
 Our algorithm attempts to construct larger and larger parts of a potentially good set $\Pi$,
 and at each step it verifies if the corresponding graph still has a chance to be extended to
 a planar graph. In the case of $\DB(6,6,0)$, to speed up the computation, we exploit Lemma 2.2 from \cite{Lovasz}.

Summarizing, we have the following

\begin{lemma}
\label{lemma:forbiddenGraphs}
None of the dumbbells $\DB(6,6,l')$,  $-3\le l'\le 0$ can be drawn as a thrackle.
\end{lemma}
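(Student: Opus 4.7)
The plan is to exploit the reduction, already set up in the excerpt, between the existence of a thrackle drawing of an abstract graph $G$ and the existence of a set of permutations $\Pi = \{\pi_e : e \in E(G)\}$ prescribing the crossing orders along the edges, such that the derived abstract graph $G'(G,\Pi)$ is planar. Since $G'(G,\Pi)$ also encodes, via the auxiliary four-cycles around each former crossing, that pairs of edges must \emph{properly} cross, a thrackle exists if and only if at least one such $\Pi$ yields a planar $G'(G,\Pi)$. Hence, for each of the four candidate graphs $\DB(6,6,l')$ with $-3\le l'\le 0$, it suffices to certify that no choice of $\Pi$ produces a planar $G'$.

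For $l'=-3$, I would simply invoke Theorem~5.1 of~\cite{Lovasz}, which already rules out $\DB(6,6,-3)$. For the three remaining dumbbells $\DB(6,6,0)$, $\DB(6,6,-1)$, $\DB(6,6,-2)$, the plan is to run the backtracking procedure described in this section. Concretely, fix an arbitrary orientation of each edge, and enumerate the variables $\pi_e \in \mathrm{Sym}(E_e)$ in some fixed edge order; after each partial assignment, extend the partial $G'$ correspondingly and test whether it is still a subgraph of some planar graph, pruning the branch as soon as the intermediate $G'$ fails to be planar (planarity testing handled via the P.I.G.A.L.E.~implementation of \cite{Fraysseix}). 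To keep the search tractable, I would prune aggressively using Lemma~5.2 of~\cite{Lovasz}, which forces a global dichotomy on every directed $C_6\subset G$: on each six-cycle, once the crossing orders on two edges are fixed, the orders on all remaining edges of that cycle are determined up to a single binary choice. For $\DB(6,6,0)$, the two overlapping $C_6$'s share only one vertex, so each cycle independently gives a binary choice, reducing the starting configurations to a constant number; the remaining freedom lies in how the edges of the two cycles interleave in each other's permutations.

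The main obstacle is that the naive search space size $\prod_e m(e)!$ is astronomically large even for these small graphs (each $C_6$ already contributes $3! = 6$ per edge, and the dumbbells have $11$--$12$ edges), so the proof really hinges on the pruning being effective enough in practice. This is why the $C_6$-dichotomy of \cite{Lovasz} is crucial: it collapses the top of the search tree to a handful of cases, after which intermediate planarity tests on partially built $G'$ quickly kill inconsistent branches. I would verify correctness of the implementation by spot-checking that it correctly accepts small cycles $C_5$ and $C_6$ (which are known to be thrackleable) and rejects $C_4$, and by confirming that the enumeration is exhaustive with respect to the dichotomy. Once the code reports ``no planar $G'$'' for each of the three target graphs, combined with the citation for $\DB(6,6,-3)$, the lemma follows.
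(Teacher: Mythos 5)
Your proposal matches the paper's proof almost verbatim: cite Theorem~5.1 of \cite{Lovasz} for $\DB(6,6,-3)$, and for the other three dumbbells run a backtracking search over the crossing-order permutations $\Pi$ with intermediate planarity tests (via the P.I.G.A.L.E.\ implementation of \cite{Fraysseix}), pruned by the $C_6$-dichotomy of Lemma~5.2 of \cite{Lovasz}. The only difference is cosmetic—the paper additionally mentions using Lemma~2.2 of \cite{Lovasz} to speed up the $\DB(6,6,0)$ case—so this is essentially the same computer-assisted argument.
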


According to Lemma~\ref{lemma:forbiddenGraphs}, Theorem~\ref{thm:Approaching bound} can be
 applied with $c=6, l=0$, and Theorem~\ref{thm:betterBound} follows.

For any $\eps>0$, our Theorem~\ref{thm:Approaching bound} and the above observations 
provide a deterministic algorithm with bounded running time to prove that all thrackles
 with $n$ vertices have at most $(1+\eps)n$ edges or to exhibit a counterexample to Conway's conjecture.

In what follows, we estimate the dependence of the running time of our algorithm on $\eps$. 
The analysis uses the standard random access machine model.
 In particular, we assume that all basic arithmetic operations can be carried out in constant time.

\begin{theorem}
\label{thm:runningTime}
 For any $\eps>0$, there is a deterministic algorithm with running time $e^{O((1/\eps^2)\ln(1/\eps))}$ to prove that all thrackles with $n$ vertices have at most $(1+\eps)n$ edges or to exhibit a counterexample to Conway's conjecture.
\end{theorem}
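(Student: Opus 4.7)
The plan is to combine Theorem~\ref{thm:Approaching bound} with the backtracking thrackleability test already described in this section. Given $\eps>0$, I would first choose parameters $c=c(\eps)$ and $l=l(\eps)$ large enough that $\tau(c,l)\le 1+\eps$, then run the planarity-based thrackleability check on every dumbbell $\DB(c',c'',l')$ with even $6\le c',c''\le c$ and $-c'/2\le l'\le l$. If any such dumbbell is thrackleable, its drawing has $|V|+1$ edges and hence is itself a counterexample to Conway's conjecture; otherwise, the hypothesis of Theorem~\ref{thm:Approaching bound} is met and we conclude $t(n)\le(1+\eps)n$.

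The first task is to show $c(\eps),l(\eps)=O(1/\eps)$ suffice. Working with the second branch of the formula for $\tau(c,l)$, set $r=\lfloor l/2\rfloor$; the denominator is dominated by $2c^{2}r^{2}$, while each term in the numerator is of order at most $c^{2}r$, $cr^{2}$ or $c^{2}$. A short asymptotic inspection shows $\tau(c,l)-1=O(1/c+1/r)$, so taking $c=l=\lceil C/\eps\rceil$ for a suitable absolute constant $C$ guarantees $\tau(c,l)\le 1+\eps$. The total number of relevant dumbbells is then at most $O(c^{2}l)=O(1/\eps^{3})$, polynomial in $1/\eps$, and each of them has $V=c'+c''+l'-1=O(1/\eps)$ vertices and $E=V+1=O(1/\eps)$ edges.

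Next I would bound the cost of the thrackleability test on a single dumbbell. Following the construction preceding Lemma~\ref{lemma:forbiddenGraphs}, a drawing as a thrackle is determined by the crossing-order permutations $\pi_{e}$, $e\in E$. Each $\pi_{e}$ is a permutation of a set of size $m(e)\le E$, so the number of candidate tuples $\Pi$ is at most $(E!)^{E}=e^{O(E^{2}\ln E)}=e^{O((1/\eps^{2})\ln(1/\eps))}$. For each candidate $\Pi$ one builds the auxiliary graph $G'(\DB(c',c'',l'),\Pi)$ with $O(E^{2})$ vertices and edges, and tests planarity in linear time via Hopcroft--Tarjan. Multiplying by the polynomial number of dumbbells, the total running time remains $e^{O((1/\eps^{2})\ln(1/\eps))}$. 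The symmetry observation from \cite{Lovasz} about directed 6-cycles and similar pruning tricks are optional speedups and do not affect the asymptotics.

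The main obstacle is not algorithmic but analytic: one must verify carefully that the explicit rational function $\tau(c,l)$ really behaves like $1+O(1/c+1/r)$ in the prescribed regime, since its formula mixes several competing quadratic terms; this is a routine but slightly delicate calculation on the coefficients. A minor secondary point is accounting for the fact that each $\pi_{e}$ lives in a set of size strictly less than $E$ (edges sharing an endpoint with $e$ are excluded), which only improves the bound $(E!)^{E}$ and therefore does not change the asymptotic runtime $e^{O((1/\eps^{2})\ln(1/\eps))}$ claimed in the theorem.
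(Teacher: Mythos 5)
Your proposal is correct and follows essentially the same route as the paper: choose $c,l=\Theta(1/\eps)$ so that $\tau(c,l)\le 1+\eps$, enumerate the $O(\mathrm{poly}(1/\eps))$ relevant dumbbells, and for each one enumerate the $(E!)^{E}=e^{O((1/\eps^{2})\ln(1/\eps))}$ possible crossing-order tuples $\Pi$ and test the auxiliary graph $G'(G,\Pi)$ for planarity in linear time. Your asymptotic estimate $\tau(c,l)-1=O(1/c+1/r)$ is what the paper's "routine calculation" amounts to (the paper records the exact sufficient inequality for $c$ in terms of $r,\eps$), so the two arguments differ only in that the paper carries out the bookkeeping explicitly.
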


\begin{proof}
First we estimate how long it takes for a given $c$ and $l$, satisfying the assumptions in Theorem \ref{thm:Approaching bound},
to check whether there exists a dumbbell $\DB(c',c'',l')$ with $c'$ and $c''$ even, $6\leq c' \leq c'' \leq c$,
and  with $-c'/2\leq l'\leq l$,
that can be drawn as a thrackle. Clearly, there are
$$\sum_{\begin{subarray}{c}
c'=6\\
c' \ \mathrm{is} \ \mathrm{even}
\end{subarray}}^{c}\frac{(\frac{c'}{2}+l+1)(c-c'+2)}{2}= \frac{1}{8}lc^2+\frac{1}{48}c^3-\frac{3}{4}lc+l+\frac{1}{4}c^2-\frac{25}{12}c+3\leq
\kappa(lc^2+c^3)$$
dumbbells to check, for some $\kappa>0$.
In order to decide, whether a fixed dumbbell with $m$ edges can be drawn as a thrackle, we construct at most $(m-2)!^{m}$ graphs, each with at most $O(m^2)$ edges, and we test each of them for planarity. Thus, the total running time of our algorithm is $O((lc^2+c^3)(2c+l-2)!^{2c+l}(2c+l)^2)$.
Approximating the factorials by Stirling's formula, we can conclude that the running time is $O((2c+l)^{(2c+l)^2+\frac{1}{2}(2c+l)+5}e^{-(2c+l)})$.

Now, for any $1>\epsilon>0$ we show how big values of $l$ and $c$ we have to take so that 
Theorem \ref{thm:Approaching bound} gives the upper bound $(1+\epsilon)n$ on the maximum number of edges in a thrackle.
We remind the reader that $r=\lfloor l/2\rfloor$. 
It can be shown by routine calculation that there are three constants  $\kappa, \kappa_r$ and $\kappa_c$ so that the following holds.
Given $\epsilon>0$,  for $r= \lceil\frac{\kappa_r}{\eps}\rceil$,
and $c$ such that $$c\geq \frac{\kappa_c}{\eps} \geq \frac{\kappa r^2}{\epsilon(2r^2+14r+24)-2r-7}$$
the value of $\tau(c,l)$ introduced in Theorem \ref{thm:Approaching bound} is at most $1+\eps$.
For the sake of completeness we give the sufficient condition for $c$ only in terms of $r$ and $\epsilon$:

$$c\geq \frac{r^2(2-2\epsilon)+r(11-8\epsilon)+11-6 \epsilon+(r+3)\sqrt{(r^2(4
 +8 \epsilon+4 \epsilon^2)+ r(4+36 \epsilon+ 8 \epsilon^2)+1+28 \epsilon+4 \epsilon^2)}}{\epsilon(2r^2+14r+24)-2r-7}
$$
 Thus, for these values of $c$ and $r$ 
Theorem  \ref{thm:Approaching bound} gives the required bound,
i.e. at most $(1+\epsilon)n$. Plugging  $\frac{\kappa_c}{\eps}$ and
 $2\frac{\kappa_r}{\eps}$ as $c$ and $l$, respectively, in  $O((2c+l)^{(2c+l)^2+\frac{1}{2}(2c+l)+5}e^{-(2c+l)}),$
 the theorem follows.
\end{proof}

\section{Concluding remarks}

We say that two cycles $C_1$ and $C_2$ of a graph are at distance $l\ge 0$, if the length of a shortest path joining a vertex of $C_1$ to a vertex of $C_2$ is $l$. The following Tur\'an-type questions were motivated by the proof of Theorem~\ref{thm:Approaching bound}.

\noindent(1) Given two integers $c_1, c_2$, with $3\leq c_1\leq c_2$, what is the maximum number of edges that a planar graph on $n$ vertices can have, if its girth is at least $c_1$, and no two cycles of length at most $c_2$ share an edge?

\noindent(2) Given three integers $c_1, c_2$, and $l$, with $3\leq c_1\leq c_2$ and $l\ge 0$, what is the maximum number of edges that a planar graph on $n$ vertices can have, if its girth is at least $c_1$, and any two of its cycles of length at most $c_2$ are at distance larger than $l$ ?

%More generally, at most how many edges can a planar graph on $n$ vertices have if its girth is at least $c_1$, and it does not contain any dumbbell
%$\DB(c_1',c_2',l')$ with $c_1\leq c_1'\leq c_2'\leq c_2$ and $-\lfloor c_1'/2 \rfloor\leq l' \leq l$ ? Question (1) corresponds to the case $l=-1$, and  question (2) to the %case $l\geq 0$.
 The inequalities (\ref{eqn:Approaching bound1}) provide nontrivial upper bounds for restricted versions of the above problem for bipartite graphs.

We have the following general result.

\begin{theorem}
\label{theorem:TuranUpperBound}
Let $c_1, c_2$,  and $l$ denote three non-negative natural numbers with $3\leq c_1\leq c_2$. Let $G$ be a planar graph with $n$ vertices and girth at least $c_1$.
\begin{enumerate}[(i)]
\item
If no two cycles of length at most $c_2$ share an edge, then  $|E(G)|\le \frac{c_1c_2+c_1}{c_1c_2-c_2-1}n$.
\item
If no two cycles of length at most $c_2$ are at distance at most $l$, then \\$|E(G)|\le \frac{c_1c_2+2\lfloor l/2\rfloor c_2 +2\lfloor l/2 \rfloor +c_2+1}{2\lfloor l/2 \rfloor c_2 - 2\lfloor l/2\rfloor+c_1c_2-c_1}n.$
\end{enumerate}
\end{theorem}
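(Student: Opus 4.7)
The plan is to adapt the Euler-formula plus edge--face double-counting argument used in Case~(A) of the proof of Theorem~\ref{thm:Approaching bound}. By a standard block-decomposition induction, identical to the one concluding Case~(A), it suffices to prove the bound for 2-connected planar graphs $G$: gluing two blocks at a cut vertex adds an $O(1)$ term on the right-hand side, which is absorbed by the negative additive constants appearing in the 2-connected bounds. So assume $G$ is 2-connected and fix a planar embedding. Write $n,m,f$ for the numbers of vertices, edges, and faces, and $f_{c_2}$ for the number of faces of size at most $c_2$. Since the girth is at least $c_1$, every face boundary has length at least $c_1$, so summing face sizes gives
$$2m \;\ge\; c_1 f_{c_2} + (c_2+1)(f-f_{c_2}),$$
which rearranges to $f \le \bigl((c_2+1-c_1)f_{c_2}+2m\bigr)/(c_2+1)$. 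Combined with Euler's formula $f=m-n+2$, both claimed inequalities reduce to finding an appropriate upper bound on $f_{c_2}$.

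For part~(i), since $G$ is 2-connected the face boundaries are simple cycles, and the hypothesis that no two cycles of length at most $c_2$ share an edge translates to: no two small faces share an edge. Hence the small faces contribute pairwise edge-disjoint cycles of length at least $c_1$, giving $f_{c_2}\le m/c_1$. Plugging this into the $f$-bound and applying Euler yields, by routine algebra,
$$m \;\le\; \frac{c_1(c_2+1)}{c_1 c_2 - c_2 - 1}(n-2) \;\le\; \frac{c_1 c_2 + c_1}{c_1 c_2 - c_2 - 1}\,n,$$
as required.

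For part~(ii) with $r=\lfloor l/2\rfloor$, the target is the disjoint-assignment bound $f_{c_2}(c_1+2r)\le n$, exactly analogous to the inequality $f_c(2r+6)\le n$ used inside the proof of Theorem~\ref{thm:Approaching bound}. To each small face $F$ we assign its $|F|\ge c_1$ boundary vertices together with the first $r$ internal vertices along each of two vertex-disjoint paths leaving $F$; these paths are produced by Menger's theorem in the 2-connected graph $G$. The hypothesis that any two small faces are at distance strictly greater than $l\ge 2r$ forces these assigned sets to be pairwise disjoint, because a vertex charged simultaneously to both $F_1$ and $F_2$ would yield a path from $F_1$ to $F_2$ of length at most $2r\le l$. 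Hence $f_{c_2}(c_1+2r)\le n$. Substituting into the $f$-inequality and invoking Euler produces
$$m \;\le\; \frac{c_1 c_2 + 2rc_2 + 2r + c_2 + 1}{(c_1+2r)(c_2-1)}\,n - \frac{2(c_2+1)}{c_2-1},$$
and the identity $(c_1+2r)(c_2-1)=c_1 c_2 - c_1 + 2rc_2 - 2r$ matches the denominator stated in the theorem.

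The main obstacle is executing the disjoint-assignment step of part~(ii) cleanly in degenerate configurations, namely when $F$ has few ``escape edges'' or when $G$ contains at most one small face. Two-connectedness forces at least two vertex-disjoint paths to leave $F$ (otherwise some boundary vertex of $F$ would be a cut vertex), and Menger's theorem then guarantees that these paths can be chosen to have at least $r$ internal vertices on the $F$-side before meeting any other small face, by routing them into the distance buffer guaranteed by the hypothesis. The edge case $f_{c_2}\le 1$ is dispatched directly, since then $f_{c_2}(c_1+2r)\le c_1+2r\le n$ for any nontrivial $G$. Once this vertex accounting is in hand, both parts of the theorem reduce to the mechanical Euler calculations displayed above.
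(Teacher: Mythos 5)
Your proposal matches the paper's own (outlined) proof essentially step for step: both reduce to the $2$-connected case via block decomposition, both use the double-counting inequality $2m\geq c_1 f_{c_2}+(c_2+1)(f-f_{c_2})$ together with Euler's formula, and both use $f_{c_2}\leq m/c_1$ for part (i) and $f_{c_2}\leq n/(c_1+2\lfloor l/2\rfloor)$ for part (ii). The algebra you carried out is correct and yields exactly the stated constants, so this is the same argument with the routine calculations filled in.

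One small caution about your edge case: when $f_{c_2}\leq 1$, the inequality $c_1+2r\leq n$ that you invoke is not automatic (a small $2$-connected non-cycle graph can have $n<c_1+2r$ when $r$ is large). What actually saves the argument is that for $f_{c_2}\leq 1$ one can plug $f_{c_2}\leq 1$ directly into the $f$-inequality; since $c_2+1-c_1\geq 0$, the resulting bound on $m$ is still dominated by the claimed coefficient $\tfrac{c_1 c_2+2rc_2+2r+c_2+1}{(c_1+2r)(c_2-1)}$, so the conclusion holds. This is a cosmetic repair, not a different idea.
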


\begin{proof} (Outline.)
Without loss of generality, we can assume in both cases that $G$ is connected, it has no vertex of degree {\em one}, and it is not a cycle.
To establish part (i), consider an embedding of $G$ in the plane. Let $m=|E(G)|$, and let $f$ and $f_{c_2}$ stand for the number of faces of $G$ and for the number of faces of length at most $c_2$. We follow the idea of the proof of Case (A), Theorem \ref{thm:Approaching bound}, with $f_{c_2}\leq \frac{1}{c_1}m$ instead of $f_c\leq \frac{1}{6}m$, and with the
inequality
\begin{equation}
\nonumber
2m\geq c_1f_{c_2}+(c_2+1)(f-f_{c_2})
\end{equation}
replacing (\ref{eqn:Approaching bound100}).
Analogously, in the proof of part (ii), we use $f_{c_2}\leq \frac{1}{2\lfloor l/2 \rfloor+c_1}n$ instead of the inequality $f_c\leq \frac{1}{2r+6}n$.
\end{proof}

It is possible that the constant factor in the part (i) of Theorem~\ref{theorem:TuranUpperBound}
 is tight for all values of $c_1$ and $c_2$. It is certainly tight for all values of the form 
$c_1=ml$ and $c_2=m(l+1)-1$, where $m$ and $l$ are natural numbers, as is shown by the following result.

\begin{theorem}
\label{theorem:construction}
For any positive integers $n_0$, $m\geq 1$, and $l\geq 3$, one can construct a plane graph $G=(V,E)$ on at least $n_0$ vertices with girth $ml$
such that all of its inner faces are of size $ml$ or $m(l+1)$, its outer face is of size $2ml$, and each edge of $G$ not on its outer face belongs to exactly one cycle of size $ml$, which is a face of $G$. The second smallest length of a cycle in $G$ is $m(l+1)$.
\end{theorem}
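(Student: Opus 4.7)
I will construct $G$ explicitly as a member of a parameterized family $\{G_k\}_{k\geq 1}$ with $|V(G_k)|$ tending to infinity; choosing $k$ large enough gives $|V(G_k)|\geq n_0$. It suffices to treat the case $m=1$: given a planar graph $G'$ satisfying the claim for parameters $(1,l)$, subdividing each of its edges into $m$ segments (inserting $m-1$ new degree-$2$ vertices per edge) yields a planar graph of girth $ml$ whose outer face has size $2ml$, whose inner faces have size $ml$ or $m(l+1)$, and in which each non-outer subedge lies on the same two faces as the original edge, hence on exactly one cycle of size $ml$ that is a face. Cycles of the subdivided graph correspond bijectively to cycles of $G'$ with lengths multiplied by $m$, so the second smallest cycle length becomes $m(l+1)$.

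For $m=1$, I would build $G_k$ as a concentric ``onion.'' The outer face is a fixed $2l$-cycle $C$. Inside $C$ we place alternating rings of $l$-faces and $(l+1)$-faces, terminating at a central $l$-face, with the property that any two inner faces sharing an edge have different sizes. For $l=3$ the base case $G_1$ is the $9$-vertex graph consisting of an outer hexagon, a central triangle, and a single ring of three triangles alternating with three squares. To obtain $G_{k+1}$ from $G_k$ I would select an inner $(l+1)$-face $F$ and replace its empty interior by a small planar ``patch'' whose outer boundary is $\partial F$, whose inner faces are $l$-gons and $(l+1)$-gons, and whose local alternation respects the global invariant (so each boundary edge of the patch sees an $(l+1)$-face on the patch side). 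Iterating makes $|V(G_k)|$ grow without bound while preserving the outer $2l$-cycle.

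The four bookkeeping properties---girth $ml$, inner faces of size $ml$ or $m(l+1)$, outer face of size $2ml$, and each non-outer edge on exactly one $ml$-cycle face---are immediate from the construction. The only nontrivial check is the second smallest cycle length. Given any cycle $C'$ in $G$, let $S$ denote the set of inner faces enclosed by $C'$. If $|S|=1$ then $C'=\partial F$ for some face $F$, so $|C'|\in\{ml,m(l+1)\}$. If $|S|\geq 2$, the alternation property forces two faces of $S$ sharing an edge to have different sizes, and a standard face-counting identity yields
\[|C'| \;=\; \sum_{F\in S}|\partial F| \;-\; 2\,|E(S)| \;\geq\; ml + m(l+1) - 2 \;>\; m(l+1)\quad\text{for }l\geq 3,\]
where $E(S)$ is the set of edges shared between pairs of faces in $S$. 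Hence the second smallest cycle length is exactly $m(l+1)$.

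The main obstacle lies in the inflation step: explicitly exhibiting a ``patch'' that fits into an $(l+1)$-cycle boundary, consists internally of $l$- and $(l+1)$-faces only, and respects the bipartite alternation across every edge (so that gluing it into $G_k$ preserves the global ``one $l$-face per interior edge'' condition). For $l=3$ such a patch can be drawn directly; for general $l$ one must give a uniform recipe specifying how many auxiliary $l$- and $(l+1)$-faces to insert and how to glue them consistently along the $(l+1)$-boundary, which is the principal combinatorial bookkeeping to be carried out.
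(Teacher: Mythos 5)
Your reduction to $m=1$ by subdivision is exactly the paper's opening step, and the overall ``concentric ring'' picture---alternating layers of $l$-faces and $(l+1)$-faces---is the same picture the paper uses. But the direction is reversed and, more importantly, the proposal stops short of a proof. The central step you describe---exhibiting a ``patch'' whose boundary is an $(l+1)$-cycle, whose internal faces have sizes $l$ and $l+1$ only, every boundary edge of which sees an $(l+1)$-face on the patch side, with girth $l$ preserved and every interior edge on exactly one $l$-face---is the content of the theorem in miniature, and you explicitly leave it as the ``principal combinatorial bookkeeping to be carried out.'' It is not at all clear such a patch exists: already for your base case $l=3$, the only $(l+1)$-faces are the three quadrilaterals, each of which has one boundary edge on the outer hexagon and three boundary edges that already lie on triangles, and naive refinements (a vertex joined to all boundary vertices; a concentric smaller quadrilateral) both violate the ``exactly one $l$-face per interior edge'' invariant. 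Without this, nothing has been constructed.

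The paper sidesteps the local-refinement difficulty by building \emph{outward}: it starts from a single $l$-cycle $G_0$ and repeatedly attaches annular layers of $(l+1)$-faces, then $l$-faces, to the current outer face, terminating at a stage $G_{4i+2}$. Because each layer is glued onto the outer boundary rather than inserted into an existing face, both boundary cycles of a layer are freely chosen, the alternation invariant is immediate, and the recursion closes up because the outer-face length is periodic with period four in the layer index. Your second-smallest-cycle argument also needs repair: the inequality $|C'|\geq ml+m(l+1)-2$ for a cycle enclosing at least two faces is asserted but not derived---it requires an upper bound on the number of interior edges $|E(S)|$, and it would fail outright if two adjacent faces could share more than one edge. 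The paper instead argues that a shortest non-face cycle must either traverse from one layer boundary $f_{4i+1}$ to a non-adjacent one $f_{4i+5}$ (forcing length at least $l$ by the layer widths) or be confined to a bounded subgraph that can be inspected directly.
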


\begin{proof}
\begin{figure}[h]
\centering
\includegraphics[scale=0.5]{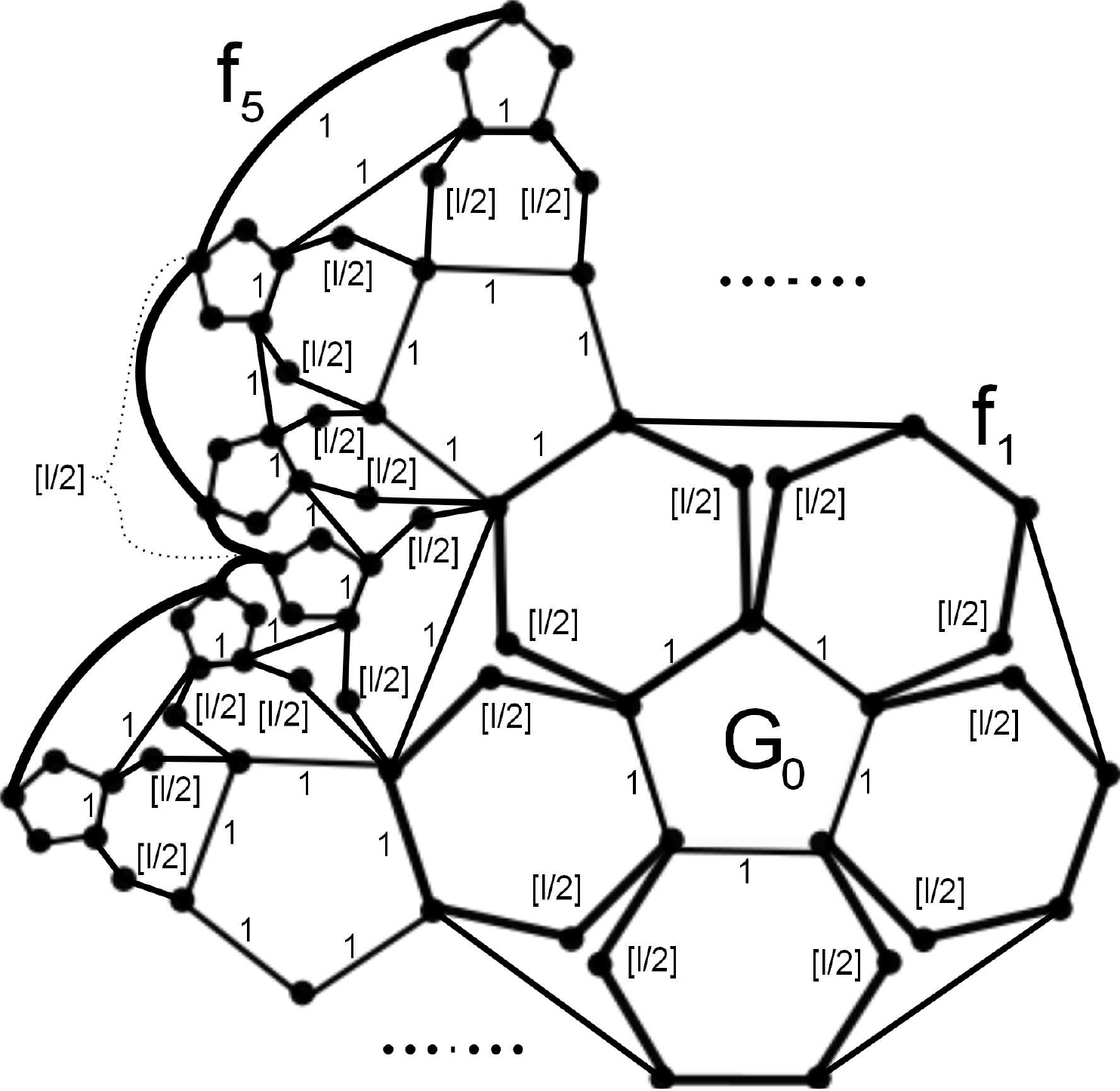}
\caption{The key part of the construction from the proof of Theorem \ref{theorem:construction} for $l=5$, and $m=1$}.
\label{fig:construction4}
\end{figure}

\begin{figure}[h]
\centering
\includegraphics[scale=0.5]{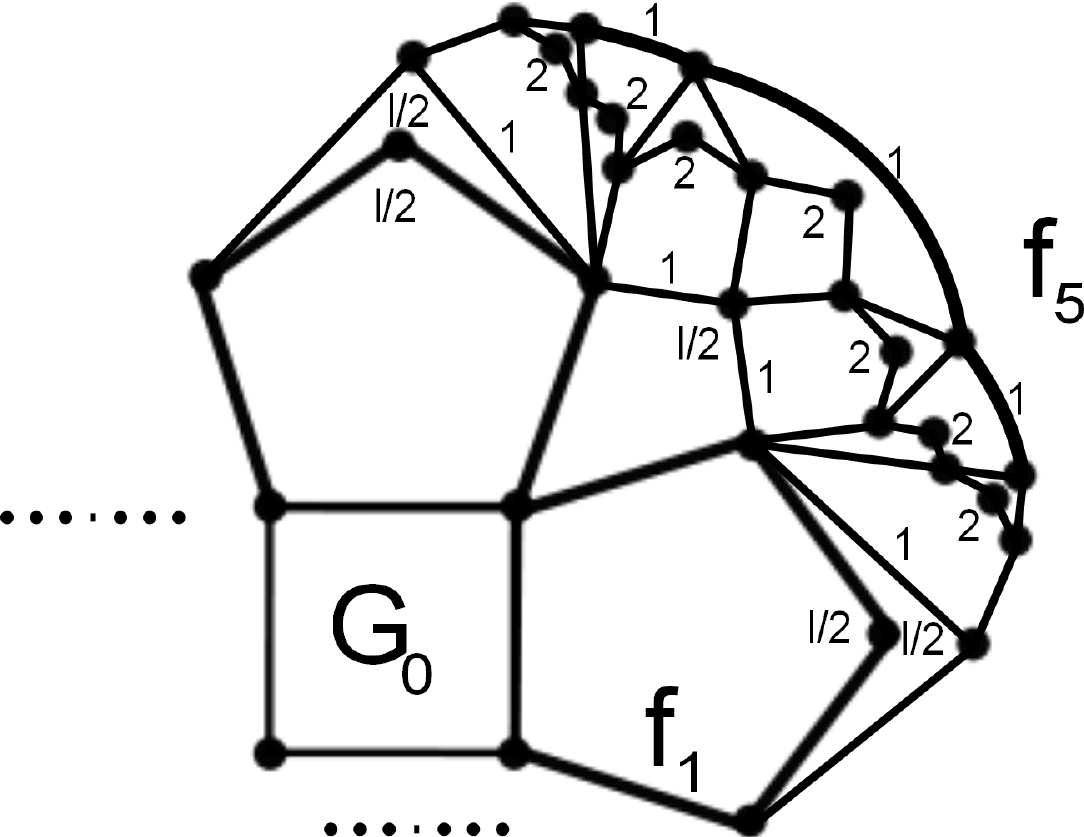}
\caption{The key part of the construction from the proof of Theorem \ref{theorem:construction} for $l=4$, and $m=1$}.
\label{fig:construction44}
\end{figure}

Intuitively, one can think of a graph $G$ meeting the requirements of the theorem as a ``generalized chessboard'' with white and black fields (faces) of size $ml$ and $m(l+1)$, respectively.

Observe that it is enough to provide a construction for $m=1$. Indeed, given a construction $G$ for some $l=l'$, $n_0=n_0'$, and $m=1$, for any $m'>1$, one can subdivide each edge of $G$ into $m'$ pieces to obtain a valid construction for $l:=l'$, $n_0:=n_0'$, and $m:=m'$.

Here we consider only the case when $l$ is {\em odd}; the other case can be treated analogously. We construct $G$ recursively, starting from a plane graph $G_0$, 
which is a cycle of length $l$, as depicted in Figure \ref{fig:construction4}. Let $f_i$ denote the outerface of $G_i$, $i=0,1,2,\ldots$ (for $i>1$, the outer faces $f_i$ are not completely depicted in the figure). Our construction satisfies the condition that each edge of $G$ lies exactly on one outerface $f_i$ for some $i$.

For any $i\ge 0$, we obtain $G_{2i+1}$ from $G_{2i}$, by attaching
faces of size $l+1$ along $f_{2i}$ in the way indicated in Figure \ref{fig:construction4} (the labels of the paths in the figure indicate the length).
Analogously, for any $i\ge 1$, the graph $G_{2i}$ can be obtained from $G_{2i-1}$, by attaching to the sides of $f_{2i-1}$
faces of size $l$, in the way indicated in the figure. Observe that Figure \ref{fig:construction4} can be easily
modified to work for any odd value of $l$, and it is not hard to obtain similar construction for even values either (see Figure \ref{fig:construction44}). The key feature of the construction is that the length of the outer face $f_1$ is the same as the length of the outer face $f_5$ (both are drawn with thicker lines in the figure). Thus, we can repeat the pattern consisting of the outer faces $f_1,\ldots, f_5$ until the number of vertices in $G$ is at least $n_0$, and then finish with a graph $G_{4i+2}$, for some $i$.
Notice, that during this process we never create multiple edges, and that each edge lies on the outer face of exactly one $G_i$.

In what follows, we show that the girth of $G$ is $l$ and that no two cycles of length $l$ share an edge, which concludes the proof. To this end we show that a smallest cycle $C$ in $G$
is a face cycle of length $l$.
% Notice that $\max_{f_i \cap E(C) \not= \emptyset}i -  \min_{f_i \cap E(C) \not= \emptyset}i=1$, as otherwise length of $C$ is bigger than $l$.
%It is not hard to see that $C$ has to be a face cycle of length $l$.
 In order to see this we proceed by distinguishing two cases. 

First consider the case, when $C$ contains vertices belonging to the outer face $f_{4i+1}$, and
vertices belonging to the outer face $f_{4i+5}$, for some $i$. In this case we are done,
since a shortest path between $f_{4i+1}$ and  $f_{4i+5}$ is of length $l-1$. Otherwise we can proceed by checking a small subgraph of $G$.
\end{proof}

If we slightly relax the conditions in Theorem \ref{theorem:TuranUpperBound} by forbidding only dumbbells determined by {\em face cycles},
 we obtain some tight bounds. For instance, it is not hard to prove the following.

\begin{theorem}
\label{theorem:TuranUpperBound2}
Let $c_1$ and $c_2$ be two nonnegative integers with $3\leq c_1\leq c_2$. Let $G$ be a plane graph on $n$ vertices that 
has no face shorter than $c_1$ and no two faces of length at most $c_2$ that share an edge. Then we have
$|E(G)|\le \frac{c_1c_2+c_1}{c_1c_2-c_2-1}n,$
and the inequality does not remain true with any smaller constant.
\end{theorem}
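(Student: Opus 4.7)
The plan is to adapt the double-counting argument used for Theorem~\ref{theorem:TuranUpperBound}(i), with the crucial simplification that I now only need to examine faces of a fixed plane embedding rather than arbitrary cycles, so no Menger-type argument is required. As usual, I may assume without loss of generality that $G$ is connected and has no vertex of degree one. Let $m=|E(G)|$, let $f$ be the total number of faces, and let $f_{c_2}$ denote the number of faces of length at most $c_2$.

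The first step is to establish $c_1 f_{c_2}\le m$, or equivalently $f_{c_2}\le m/c_1$. This follows by restricting the edge--face incidence count to faces of length at most $c_2$: each such face has at least $c_1$ edges on its boundary (since $G$ has no face shorter than $c_1$), while each edge lies on at most one such face by the hypothesis that no two short faces share an edge. The second step is the standard incidence count over all faces,
\begin{equation*}
2m \;\ge\; c_1 f_{c_2} + (c_2+1)(f-f_{c_2}),
\end{equation*}
which, combined with $f_{c_2}\le m/c_1$, rearranges to $(c_2+1)f \le m(c_1+c_2+1)/c_1$. Euler's formula $f=m-n+2$ then yields, after routine algebra, $m\le n(c_1c_2+c_1)/(c_1c_2-c_2-1)$, as claimed.

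For the tightness of the constant, I would appeal to Theorem~\ref{theorem:construction}. Choosing integer parameters $m',l'$ with $c_1=m'l'$ and $c_2=m'(l'+1)-1$, that theorem produces, for every $n_0$, a plane graph on at least $n_0$ vertices all of whose inner faces have length $c_1$ or $c_2+1$, whose girth is $c_1$, and in which each interior edge belongs to exactly one face of length $c_1$. In such a graph every inequality used in the upper-bound argument is simultaneously tight up to lower-order terms, so the ratio $|E|/|V|$ approaches the upper bound as $n\to\infty$.

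The main obstacle I foresee is handling the tightness claim for pairs $(c_1,c_2)$ that do not admit such integer parameters $m',l'$. To cover them I would generalize Theorem~\ref{theorem:construction} to produce a ``chessboard'' plane graph whose faces are exactly $c_1$-gons and $(c_2+1)$-gons, with every edge on the boundary of exactly one $c_1$-gon; the $|E|/|V|$ ratio of such a graph matches the upper bound by the same saturation analysis. Carrying out this construction rigorously --- ensuring that the resulting graph is simple, plane, and has the required face-size distribution for every admissible $(c_1,c_2)$ --- is the only genuinely delicate point of the proof, and is what the informal phrase ``not hard to prove'' is hiding.
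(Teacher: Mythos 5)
Your upper-bound derivation is correct and follows exactly the route the paper intends: restrict the edge--face incidence count to short faces to get $c_1 f_{c_2}\le m$, double-count all incidences to get $2m\ge c_1 f_{c_2}+(c_2+1)(f-f_{c_2})$, and close with Euler's formula; the algebra gives $m\le \frac{c_1c_2+c_1}{c_1c_2-c_2-1}\,n$. One technicality you gloss over, as does the paper's outline for Theorem~\ref{theorem:TuranUpperBound}: the step ``each edge lies on at most one short face'' assumes every edge bounds two distinct faces, which can fail at bridges (a bridge is traversed twice in a single face's boundary walk, so it can contribute $2$ to $c_1 f_{c_2}$). The standard fix --- which the paper does carry out explicitly in Case~(A) of Theorem~\ref{thm:Approaching bound} --- is to reduce to $2$-connected blocks and add up the block contributions; you should do the same.

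On the tightness claim you are right to flag a genuine gap: Theorem~\ref{theorem:construction} only supplies matching graphs for $(c_1,c_2)$ of the form $(m'l',\,m'(l'+1)-1)$. Note, though, that it is actually overkill here: that construction was designed to control \emph{all} short cycles, as required by Theorem~\ref{theorem:TuranUpperBound}(i), whereas Theorem~\ref{theorem:TuranUpperBound2} only constrains \emph{faces}. So the task is strictly easier --- you need a plane graph whose inner faces are $c_1$-gons and $(c_2+1)$-gons with the dual of the inner faces bipartite, and you are free to have short non-facial cycles. Your ``generalized chessboard'' idea is the right one, but ``not hard'' is doing real work: you need to exhibit, for every admissible pair $(c_1,c_2)$, arbitrarily large simple plane graphs with this face-size distribution (a degree-$4$ alternating vertex configuration, for instance, forces $c_1$ and $c_2+1$ to satisfy certain arithmetic constraints, so the gadget has to be chosen with some care). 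As written, your proposal correctly reduces the problem but does not actually discharge it; to count as a complete proof of the tightness statement you would need to supply that construction.
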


%\newpage

\newpage
\appendix{{\bf Appendix}}

\section{Backtracking algorithm}

For sake of completeness 
in this section we describe a backtracking algorithm checking, whether a given dumbbell $G=(V,E)$ can be drawn as a thrackle.
We orient the edges of $G$, so that we can traverse them by a single walk, so called Euler's walk, during which we visit each edge just once. 
We use the notation from Section \ref{proof2nd}.

Let us start with a description of the routines used by our algorithm.

The routine  UPDATE($\pi_e$, $e'$, pos)
 returns the updated permutation
 $\pi_e 
\in 
{E'}_{e}^{
m'(e)}$,
 which corresponds to adding one more crossing vertex to an already constructed
 part of $(G',\Pi')$ corresponding to a subgraph $G'=(V',E')$ of $G$,
 where $e\in E$, $e'\in E'$, $m'(e)$ returns the number of crossings
 of $e$ already modeled by $(G',\Pi')$, and  $
\Pi'(G')
:=
 \{ \pi_e 
\in
 {E'}_e^{
m'(e)}| e\in E'\}$.
UPDATE($\pi(e)$, $e'$, pos) returns the permutation $\pi_e'$ whose length is by one longer than $\pi_e$, such that

\begin{displaymath}
\pi'_e(i) = \left\{ \begin{array}{ll}
                \pi_e(i) &   {\mbox {if }}   i<pos \\ 
                e' &  {\mbox {if }}    i=pos \\
				\pi_{e}(i-1)&   {\mbox {if }} 	  i >pos 	
                \end{array}
                 \right.
\end{displaymath}
REVERSE\_UPDATE($\pi_e$, $e'$, $pos$) corresponds to the reverse operation of the operation UPDATE($\pi_e$, $e'$, pos).
PICK\_NEXT\_EDGE(G) returns a next edge in our Euler's walk.
 In order to check, whether $G$ can be drawn as a thrackle the algorithm just calls the procedure BACKTRACKING($e$) for an edge $e\in E$.
 The algorithm returns {\it true} if $G$ can be drawn as a thrackle, and it returns {\it false} if $G$ cannot be drawn as a thrackle.
 In our description of the algorithm we restrain from all optimization details, which were mentioned in Section \ref{proof2nd}.
The pseudocode of the backtracking routine follows.

{\begin{algorithm}[H]
  %\dontprintsemicolon
  \label{a1}
  \caption{Thrackleabilty testing} 
  {BACKTRACKING ($e\in E(G)$) \;} 
  \Begin{
	 \uIf{$(G',\Pi')$ cannot be extended }{
		\Return true 
	 }

	  \uIf{e = -1}{
		$e$  = PICK\_NEXT\_EDGE(G) 
	 }
	 
	 \uIf{$e$ has crossed all  edges in $E_e'$}{
			BACKTRACKING(-1) 
	 }
	 
	 \uElse{

    \ForAll{$e'\in E_e'$ which $e$ has not already crossed} {
		\For{pos = 1 to length($\pi_{e'}$)} {
			$\pi_{e'}$ = UPDATE($\pi_{e'}$, $e$, $pos$) \;
			$\pi_{e}$ = UPDATE($\pi_{e}$, $e'$, LENGTH($\pi_e$)+1) \;
			\uIf {IS\_PLANAR(($G',\Pi'$))}
			{
				\uIf {BACKTRACKING($e$)}
				{
					\Return true
				}
				\uElse
				{
					REVERSE\_UPDATE($\pi_{e'}$, $e$, $pos$) \;
					
					REVERSE\_UPDATE($\pi_e$, $e'$, LENGTH($\pi_e$))  
				}
			}	
				\uElse
				{
					REVERSE\_UPDATE($\pi_{e'}$, $e$, $pos$) \;
					
					REVERSE\_UPDATE($\pi_e$, $e'$, LENGTH($\pi_e$))  
				}

		}
    }
    }
    \Return false   }
\end{algorithm}}


\begin{thebibliography}{99}

\bibitem{Bollobas}
\small{
B. Bollob\'as, {\em Modern Graph Theory}, Springer-Verlag, New
York, 1998.

\bibitem{BMP}
P. Brass, W. Moser, and J. Pach, {\em Research Problems in Discrete Geometry}, Springer-Verlag, New York, 2005.

\bibitem{Cairns}
G. Cairns and Y. Nikolayevsky, Bounds for generalized thrackles,
{\em Discrete Comput. Geom.} {\bf 23} (2000), 191--206.

\bibitem{Cairns3}
G. Cairns, M. McIntyre, and Y. Nikolayevsky, The thrackle
conjecture for $K\sb 5$ and $K\sb {3,3}$. In: {\em Towards a
theory of Geometric Graphs, Contemp. Math.} {\bf 342}, Amer. Math.
Soc., Providence, RI, 2004, 35--54.

\bibitem{Cairns2}
G. Cairns and Y. Nikolayevsky, Generalized thrackle drawings of
non-bipartite graphs, {\em Discrete Comput. Geom.} {\bf 41}
(2009), 119--134.

\bibitem{Diestel}
R. Diestel, {\em Graph Theory}, Springer-Verlag, New York, 2008.

\bibitem{Fraysseix}
H. de Fraysseix, P. O. de Mendez, and P. Rosenstiehl, Tr\'emaux Trees and Planarity, {\em Internat. J. Found. of Comput. Sc.} {\bf 17} (2006),  1017-–1030.

\bibitem{Pigale}
H. de Fraysseix and P. O. de Mendez,
{\em Public Implementation of a Graph Algorithm
Library and Editor}, http://pigale.sourceforge.net/

\bibitem{Green}
J. E. Green and R. D. Ringeisen, Combinatorial drawings and
thrackle surfaces. In: {\em Graph Theory, Combinatorics, and
Algorithms, Vol. 2 (Kalamazoo, MI, 1992)}, Wiley-Intersci. Publ.,
Wiley, New York, 1995, 999--1009.

\bibitem{HoP34}
H. Hopf and E. Pannwitz, Aufgabe Nr. 167, {\em Jahresbericht
Deutsch. Math.-Verein.} {\bf 43} (1934), 114.

\bibitem{hopcroft}
J. Hopcroft, R. E. Tarjan, Efficient planarity testing, {\em
Journal of the Association for Computing Machinery} {\bf 21} (4),
549–-568.

\bibitem{Lovasz}
L. Lov\' asz, J. Pach, and M. Szegedy, On Conway's thrackle
conjecture, {\em Discrete Comput. Geom.} {\bf 18} (1998),
369--376.

\bibitem{PePi}
A. Perlstein and R. Pinchasi, Generalized thrackles and geometric
graphs in $\mathbb R\sp 3$ with no pair of strongly avoiding edges,
{\em Graphs Combin.} {\bf 24} (2008), 373--389.

\bibitem{PiRi}
B. L. Piazza, R. D. Ringeisen, and S. K. Stueckle, Subthrackleable
graphs and four cycles. In: {\em Graph theory and Applications
(Hakone, 1990), Discrete Math.} {\bf 127} (1994), 265--276.

\bibitem{Ring}
R. D. Ringeisen, Two old extremal graph drawing conjectures:
progress and perspectives, {\em Congressus Numerantium} {\bf 115}
(1996), 91--103.

\bibitem{Su}
J. W. Sutherland, L\"osung der Aufgabe 167, {\em Jahresbericht
Deutsch. Math.-Verein.} {\bf 45} (1935), 33--35.


\bibitem{Woodall}
D. R. Woodall, Thrackles and deadlock, in:   Combinatorial
Mathematics and Its Applications (Welsh, D. J. A., ed.), Academic
Press, 1969, 335-–348.

\bibitem{Conway}
Unsolved problems. Chairman: P. Erd\H os, in: {\em Combinatorics
(Proc. Conf. Combinatorial Math., Math. Inst., Oxford, 1972)},
Inst. Math. Appl., Southend-on-Sea, 1972, 351--363.
}
\end{thebibliography}
\end{document}